\def\ps@pprintTitle{%
 \let\@oddhead\@empty
 \let\@evenhead\@empty
 \def\@oddfoot{}%
 \let\@evenfoot\@oddfoot}
\journal{Sustainable Energy, Grids and Networks}
\newtheorem{assump}{Assumption}
\newtheorem{theorem}{Theorem}
\newtheorem*{theorem*}{Theorem}
\algnewcommand\algorithmicinput{\textbf{Indices and Definitions:}}
\algnewcommand\Definitions{\item[\algorithmicinput]}%
\def\thesismode{0}
\begin{document}

\begin{frontmatter}

\title{A Two-Stage Polynomial Approach to Stochastic Optimization of District Heating Networks}
\author[a]{Marc Hohmann\corref{mycorrespondingauthor}}
\author[b]{Joseph Warrington}
\author[b]{John Lygeros}
\address[a]{Urban Energy Systems Group, Empa, Swiss Federal Laboratories for Materials Science and Technology, \"Uberlandstrasse 129, 8600 D\"ubendorf, Switzerland}
\address[b]{Automatic Control Laboratory, ETH Zurich, Physikstrasse 3, 8092 Z\"urich, Switzerland}

\cortext[mycorrespondingauthor]{Corresponding author}

\begin{abstract}
In this paper, we use stochastic polynomial optimization to derive high-performance operating strategies for heating networks with uncertain or variable demand. The heat flow in district heating networks can be regulated by varying the supply temperature, the mass flow rate, or both simultaneously, leading to different operating strategies. The task of choosing the set-points within each strategy that minimize the network losses for a range of demand conditions can be cast as a two-stage stochastic optimization problem with polynomial objective and polynomial constraints. We derive a generalized moment problem (GMP) equivalent to such a two-stage stochastic optimization problem, and describe a hierarchy of moment relaxations approximating the optimal solution of the GMP. Under various network design parameters, we use the method to compute (approximately) optimal strategies when one or both of the mass flow rate and supply temperature for a benchmark heat network. We report that the performance of an optimally-parameterized fixed-temperature variable-mass-flow strategy can approach that of a fully variable strategy.

\end{abstract}

\begin{keyword}
	District heating, Operating strategies, Two-stage stochastic optimization, Generalized moment problem
\end{keyword}
\begin{acronym}
	\acro{ADP}{Approximate Dynamic Programming}
	\acro{CHP}{Combined Heat and Power plant}
	\acro{COP}{Coefficient of Performance}
	\acro{DDP}{Dual Dynammic Programming}
	\acro{DP}{Dynammic Programming}
	\acro{ESMP}{Energy Storage Management Problem}
	\acro{GMP}{Generalized Moment Problem}
	\acro{HP}{Heat Pump}	
	\acro{LP}{Linear Programming}
	\acro{MILP}{Mixed-Integer Linear Programming}
	\acro{MINLP}{Mixed-Integer Nonlinear Programming}
	\acro{NLP}{Nonlinear Programming}
	\acro{NP-hard}{Nondeterministic Polynomial time-hard}
	\acro{OPF}{Optimal Power Flow}
	\acro{PCM}{Phase-Change Materials}
	\acro{RES}{Renewable Energy Sources}
	\acro{SDP}{semidefinite programming}
	\acro{SOS}{Sum-of-Squares}
\end{acronym}

\end{frontmatter}

%\begin{linenumbers}

\if\thesismode1 \chapter{Two-Stage Polynomial Approach to Stochastic Optimization of District Heating Networks}\label{ch:twostage}\fi
\section{Introduction}
Climate change mitigation and the integration of renewable energy sources into the current energy system have sparked interest in the active management of heating systems. Among the various heating technologies in use today, district heating networks have drawn particular attention since they are often used in high density urban areas with significant potential to reduce operating costs and greenhouse gas emissions.

\subsection{Modelling and optimization of district heating networks}

District heating networks can be modelled and operated in various ways \allowbreak\cite{Talebi2016}, but they share some basic characteristics. A change in heat demand can be met either by changing the difference between the supply and return temperatures, the mass flow rate, or both, leading to different operating strategies. Older networks are mostly controlled by varying the supply temperature and keeping the mass flow constant, whereas newer systems tend to have variable mass flow control \cite{Duquette2016}. Each strategy is subject to a trade-off in terms of losses; higher supply and return temperatures lead to increased heat losses, whereas higher mass flow rates increase the hydraulic losses in the pipes. This trade-off has been studied in different contexts. A comparison of strategies for primary networks\footnote{The primary network transports heat from the generators to substations located at buildings or building clusters. The heat distribution among a cluster of buildings or inside a building is referred to as the secondary network.} can be found in \cite{Pirouti2013}. Hydraulic control strategies for the primary and secondary network were optimized in \cite{Jie2015}. New mass flow regulation schemes using pumps were compared to the traditional strategy of controlling the consumer side heat flow using valves in \cite{Kuosa2013}. The performance of district heating networks with multiple sources was studied in \cite{Wang2018}. 

Significant effort has also been invested in simplified models of these systems. The steady-state thermal losses of a network can be modelled as an exponential temperature drop along a pipe segment \cite{Wang2018,AWAD2009, Liu2014,Tang2014}. By replacing the exponential by its first order Taylor approximation, the authors of \cite{AWAD2009} and \cite{Tang2014} obtain a polynomial representation of the pipe output temperature. The hydraulic losses, namely the pressure drop along pipes and substations, are mass flow rate dependent and can be characterised implicitly by the nonlinear Colebrook-White equation \cite{Wang2018}. To simplify this representation, it is often assumed that a pipe segment has a constant coefficient of resistance \cite{AWAD2009,Tang2014,vanderheijde2017}, making the absolute pressure losses quadratic in the mass flow rate. Thus, both types of network loss can be modelled using polynomial functions. 

\subsection{Two-stage stochastic programs}

If the operating strategy keeps a control variable fixed (either temperature or mass flow rate), it is desirable that the fixed choice leads to acceptable performance over a range of demand conditions. Here we define this performance as the expected operational cost incurred by hydraulic and thermal losses with respect to a probability distribution of the heat demand. The problem of determining the optimal set-point of the fixed control variable minimizing the operating cost can be cast as a two-stage stochastic program. In the first stage, the optimal set-point of the fixed control variable is selected. The heat supply satisfying the consumer heat demand is optimized in the second stage by adjusting the remaining control variables.

Two-stage stochastic programs, even with linear constraints, are often intractable. A standard approach is to approximate the program by sampling the space of the uncertain disturbances. This leads to a deterministic problem with a finite number of scenarios. When dealing with convex optimization models, the scenario programs can approximate the stochastic problems with sufficient accuracy \cite{Shapiro2005} and are tractable even for a large number of scenarios \cite{Boyd2010}. There also exist reduction techniques to decrease the computational load of scenario approaches \cite{Dupacova2002}. Alternatively, a scenario-free program for linear models can be obtained by introducing decision rules \cite{Shapiro2005}, e.g.~imposing the restriction that the recourse variables are affine in the disturbance realization, at the cost of optimality. The authors of \cite{Kuhn2011} study the trade-off between tractability and optimality of linear decision rules in stochastic programming. The authors of \cite{Bampou2011} and \cite{georghiou2015} extends this concept to nonlinear decision rules. For polynomial models, interior-point methods or global optimization techniques can be coupled with the scenario method. However, interior-point methods only find local optima and global optimization techniques, for example based on semi-definite relaxations, are computationally demanding, even for deterministic problems \cite{Lasserre2014}. 

\subsection{Summary of contributions}

Using the polynomial representation outlined above, we cast the selection of an optimal set-point as a polynomial two-stage stochastic program with recourse \cite{stoch2003}. We develop an approximation of polynomial two-stage problems based on semi-definite relaxations inspired by \cite{Lasserre2010}. Using duality arguments, we derive a \ac{GMP} that is equivalent to the two-stage problem and provide a sparse hierarchy of \ac{SDP} relaxations, which returns an estimate of the optimal set-point and an expected cost estimate. Since the heat demand statistics are incorporated via moments and not scenarios, the \ac{SDP} relaxations have almost the same computational complexity as the Lasserre hierarchy for standard polynomial optimization \cite{lasserre2001a}. We use our approach to evaluate operating strategies for different network design parameters. Based on data from \cite{rees2011} and \cite{Pirouti2013} for a medium-sized district heating network connected to several clusters of buildings, we generate a number of network design cases. For each case, we are then able to choose optimal set-points in a rigorous manner, and compare the performance of the strategies in terms of hydraulic and thermal losses for a range of operating conditions.

\subsection{Paper structure}

The polynomial two-stage stochastic program is presented in Section \ref{twostagewrecourse}. The \ac{GMP} equivalent to the two-stage stochastic program is developed in \ref{twostagepoly}. The hierarchy of semidefinite relaxations is presented in Section \ref{twostagemoment}. The district heating model and approach to component sizing is described in Section \ref{dhmodel}. The optimization models to determine the set-points for each strategy are presented in Section \ref{sec:opstrat}. We provide the results of the numerical case study in Section \ref{casestudy}, and some concluding remarks are given in Section \ref{twostagepoly_conclusion}.
%\chapter{Stochastic Optimization of District Heating Networks}
%\label{ch:twostage}

%\acs{GMP} have also been used to derive upper and lower bounds in two-stage stochastic programming \cite{Infanger}. Building on the work of  \cite{Lasserre2014}, a convex approximation to chance-constrained optimization over semi-algebraic sets based on \acs{GMP} was developed in \cite{Jasour} and \cite{Lasserre2017}. 
%
%
%
%There is a great body of research on the optimal design and operation of district heating systems. 
%
%Two-stage optimization: linear tractable, policies, scenrarios, polynomial optimization
%
%Contribution moment dual sparsity 

\section{Two-stage stochastic program with recourse}\label{twostagewrecourse}
	Consider the following two-stage stochastic program with recourse:
	\begin{equation}\label{eq:prefix1}
		\begin{aligned}
			\rho=\min_{x_1 \in \mathbb{R}^{n_1}}&f_1(x_1)+\mathbf{E}_\varphi(v^*(x_1,y))\\
			\textrm{s.t.} \quad &q_l(x_1) \geq 0, \quad l=1,\ldots,N_q,
		\end{aligned}
	\end{equation}
	where $v^*: \mathbb{R}^{n_1} \times \mathbb{R}^p\rightarrow\mathbb{R}$ is the value function of the second stage problem
	\begin{equation}\label{eq:prefix2}
	\begin{aligned}
	v ^*(x_1,y)=\min_{x_2 \in \mathbb{R}^{n_2}} &f_2(x_1,x_2)\\
	\textrm{s.t.} \quad &h_{i}(x_1,x_2,y) = 0, \quad i=1,\ldots,N_h,\\
	&g_{j}(x_1,x_2,y) \geq 0, \quad j=1,\ldots,N_g,
	\end{aligned}
	\end{equation}
    and $y \in\mathbb{R}^p$ is an exogenous parameter distributed according to a probability measure $\varphi$ on semi-algebraic set $\mathbf{Y}:=\{g_{j}(y)\geq 0, \quad j=N_g+1,\ldots,N'_g\}$. The expected value $\mathbf{E}_\varphi(v^*(x_1,y))=\int_\mathbf{Y}v^*(x_1,y)d\varphi$ is defined with respect to the probability measure $\varphi$. For the time being, we assume that the distribution of $y$ is unknown; below it will be encoded through moments, obtained for example through samples of historical data. The first and second stage decision variables are $x_1 \in \mathbb{R}^{n_1}$ and $x_2 \in \mathbb{R}^{n_2}$, respectively. The realization of $y$ is not known when $x_1$ is chosen. The term \emph{recourse} refers to the fact that $x_2$ is taken after $y$ is realized. The second stage can therefore be interpreted as a parametric optimization with parameters $x_1$ and $y$. Although the focus of the present study is on heat networks, we note that other systems with polynomial constraints, such as alternating-current power systems \cite{Taylor2015a}, can be modelled in the same manner.
    
    For compactness we now define the set of feasible first-stage decisions $x_1$ as the semi-algebraic set
	\begin{equation*}
		\mathbf{K}_1:=\{x_1 \in \mathbb{R}^{n_1}\,|\,q_l(x_1) \geq 0, \,l=1,\ldots,N_q\}\, ,
	\end{equation*}
and the set of all feasible combinations of control decisions $x_1$, $x_2$ and parameter $y$ as
	\begin{equation*}
			\mathbf{K}_2:=\left\{\begin{array}{l}(x_1,x_2,y) \in \mathbb{R}^{n_1}\times\mathbb{R}^{n_2}\times\mathbb{R}^{p}\end{array} \, \left|\begin{array}{l}
			x_1 \in \mathbf{K}_1\text{, }y \in \mathbf{Y} \\h_i(x_1,x_2,y)=0, \,i=1,\ldots,N_h\\
			g_j(x_1,x_2,y) \geq 0, \,j=1,\ldots,N_g \end{array} \right.\right\} \, .
	\end{equation*}
	We make the following additional assumption with regard to two-stage problem \eqref{eq:prefix1}-\eqref{eq:prefix2}:
	\begin{assump}\label{as:poly2}
		Functions $f_1(x_1)$, $f_2(x_2)$, $q_l(x_1)$, $h_i(x_1,x_2,y)$, and $g_j(x_1,x_2,y)$ are polynomials, sets $\mathbf{K}_1$, $\mathbf{K}_2$ and $\mathbf{Y}$ are compact; and for each first-stage decision $x_1\in\mathbf{K}_1$ and parameter value $y\in\mathbf{Y}$, there exists a feasible second-stage decision, i.e., an $\overline{x}_2$ such that $(x_1, \overline{x}_2, y) \in \mathbf{K}_2$.
	\end{assump}
% 	\begin{assump}\label{as:compact2}
		
% 	\end{assump}	
% 	\begin{assump}\label{as:feasible2}
		
% 	\end{assump}
	For each $x_1$ and $y$, let $x_2^*(x_1,y)$ be the optimal second stage solution. Under Assumption \ref{as:poly2}, the optimal map $x_2^*(x_1,y)$ is measurable \cite[Theorem 2.2]{Lasserre2010}. Thus, we can compute the expected value $\mathbf{E}_\varphi(v^*(x_1,y))$.

	%%%%%%%%%%%%%%%%%%%%%%%%%%%%%%%%%%%%%%%%%%%%%%%%%%%%%%%%%%%%%%%%%%
\section{A generalized moment problem for two-stage programs with recourse}\label{twostagepoly}	
	We now state the \ac{GMP} \eqref{eq:onestage}, which encodes a two-stage polynomial problem with recourse. In this problem, we optimize over the measures $\mu_1\in\mathcal{M}(\mathbf{K}_1)_+$ and $\mu_2\in\mathcal{M}(\mathbf{K}_2)_+$, which are supported on $\mathbf{K}_1$ and $\mathbf{K}_2$ respectively.\footnote{The notation $\mathcal{M}(\mathbf{C})_+$ denotes the cone of non-negative Borel measures on a semi-algebraic set $\mathbf{C}$.} Measure $\mu_1$ describes the distribution of decision $x_1$, whereas $\mu_2$ describes the \emph{joint} distribution of $x_1$, $x_2$, and $y$. 
	\begin{subequations}\label{eq:onestage}
		\begin{align}
		\rho_{12}:=\min_{\mu_1 \in \mathcal{M}(\mathbf{K}_1)_+,\,\mu_2 \in \mathcal{M}(\mathbf{K}_2)_+} \quad &\int_{\mathbf{K}_1} f_1d\mu_1+\int_{\mathbf{K}_2} f_2d\mu_2 \label{eq:onestageobj}\\
		\textrm{s.~t.} \quad&\int_{\mathbf{K}_1} d\mu_1 =1, \label{eq:onestagepm}\\
		&\pi_{x_1,y}\mu_2=\mu_1\otimes\varphi. \label{eq:onestagenonanticip} 
		\end{align}
	\end{subequations}
The objective \eqref{eq:onestageobj} is the sum of expected values of the first- and second-stage costs when $x_1$ and $(x_1,x_2,y)$ are distributed according to $\mu_1$ and $\mu_2$ respectively. Constraint \eqref{eq:onestagepm} ensures that $\mu_1$ is a valid probability measure (probability distribution), i.e.~that it integrates to $1$. Constraint \eqref{eq:onestagenonanticip} implies that $\mu_2$ is also a probability measure, since $\mu_1$ and $\varphi$ are probability measures. In constraint (\ref{eq:onestagenonanticip}) the operator $\pi_{x_1,y}\mu_2: \mathcal{M}(\mathbf{K}_2)_+\rightarrow\mathcal{M}(\mathbf{K}_1\times\mathbf{Y})_+$ is the projection of $\mu_2$ from $(x_1,x_2,y)$ space onto $(x_1,y)$ space, and $\mu_1\otimes\varphi$ denotes the product of measures $\mu_1$ and $\varphi$.\footnote{Formally, the projection operator is defined as $\pi_{x_1,y}\mu_2(B)=\mu(B\times\mathbb{R}^{n_2}\cap\mathbf{K}_2)$ for all Borel subsets $B$ of $\mathbf{K}_1\times \mathbf{Y}$, and the product measure is defined as $(\mu_1 \times \varphi)(B_1 \times B_2)=\mu_1(B_1) \varphi(B_2)$ for all Borel sets $B_1 \in \mathbf{K}_1$ and $B_2 \in \mathbf{Y}$.} The constraint imposes the probability distributions of $x_1$ and $y$ as marginals of $\mu_2$. The projection constraint is illustrated in Fig.~\ref{fig:two_stage_fig}.
\begin{figure}
	\centering
    \includegraphics[width=0.6\textwidth]{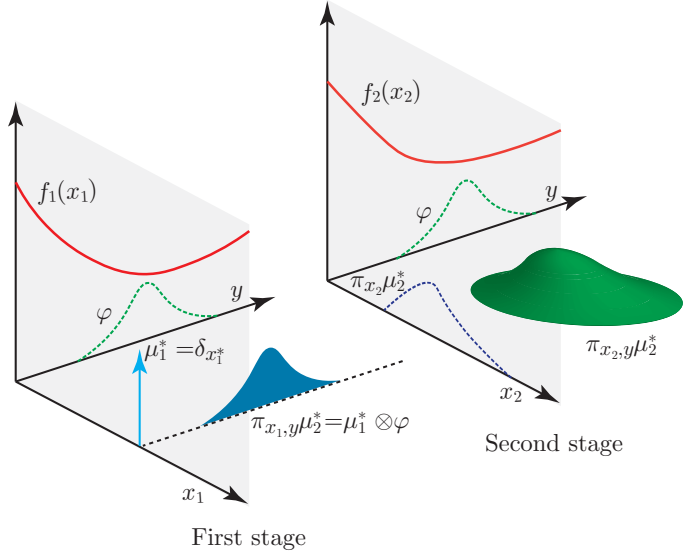}
    \caption[Illustration of the two-stage GMP]{Illustration of the projections of the measure $\mu_2$, (i) onto $(x_1,y)$ space as specified by constraint \eqref{eq:onestagenonanticip} (left plot), and (ii) onto $(x_2,y)$ space (right plot). The objective function \eqref{eq:onestageobj} is defined by integrals over $x_1$ and $x_2$. As described by Theorem \ref{thm:gmp_equiv}, the optimal solution to \eqref{eq:onestage} includes a Dirac solution $\mu_1^* = \delta_{x_1^*}$. As suggested by the \textit{First stage} plot, there will be some value of $x_1$ where it becomes advantageous to concentrate all the mass of $\mu_1$, and there exists an optimal solution featuring a deterministic first-stage decision. However, due to the influence of the second-stage costs, this point need not be the minimum of $f_1(x_1)$.}
    \label{fig:two_stage_fig}
\end{figure}
\begin{theorem} \label{thm:gmp_equiv}
		Two-stage problem (\ref{eq:prefix1})-(\ref{eq:prefix2}) and the \ac{GMP} (\ref{eq:onestage}) are equivalent, in that
\begin{enumerate}
\item[(a)] $\rho_{12}=\rho$, and 
\item[(b)] if $x^*_1$ is an optimal solution of (\ref{eq:prefix1}), then (\ref{eq:onestage}) has an optimal solution $\mu_1^*$ that includes the Dirac measure $\delta_{x^*_1}$.
\end{enumerate}
	\end{theorem}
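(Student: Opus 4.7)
The plan is to prove (a) by two inequalities, then derive (b) as a byproduct of the upper-bound construction. Throughout, the regularity afforded by Assumption \ref{as:poly2} (compact semi-algebraic domains, polynomial data, and existence of a feasible recourse for every $(x_1,y)\in \mathbf{K}_1\times\mathbf{Y}$, plus measurability of the selector $x_2^*(x_1,y)$ cited right after the assumption) is what makes the measure-theoretic manipulations legitimate.

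To establish $\rho_{12}\le\rho$, I would take an optimizer $x_1^*$ of \eqref{eq:prefix1} (which exists by compactness and continuity of $f_1$ and of $x_1\mapsto \mathbf{E}_\varphi v^*(x_1,y)$), and construct a candidate pair by setting $\mu_1^*:=\delta_{x_1^*}$ and letting $\mu_2^*$ be the pushforward of $\varphi$ under the measurable map $y\mapsto (x_1^*,x_2^*(x_1^*,y),y)$. One then checks: $\mu_2^*$ is supported in $\mathbf{K}_2$ because $x_2^*(x_1^*,y)$ is a feasible recourse for every $y\in\mathbf{Y}$; $\int d\mu_1^*=1$; and the projection $\pi_{x_1,y}\mu_2^*$ equals $\delta_{x_1^*}\otimes\varphi=\mu_1^*\otimes\varphi$ by construction. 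Evaluating the objective gives $\int f_1\,d\mu_1^* + \int f_2\,d\mu_2^* = f_1(x_1^*)+\int_{\mathbf{Y}} f_2(x_1^*,x_2^*(x_1^*,y))\,d\varphi(y) = f_1(x_1^*)+\mathbf{E}_\varphi v^*(x_1^*,y)=\rho$. This simultaneously proves $\rho_{12}\le \rho$ and exhibits a Dirac-form optimizer, which yields part (b) once (a) is complete.

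The reverse inequality $\rho_{12}\ge \rho$ is the main technical step and relies on disintegration. Take any feasible $(\mu_1,\mu_2)$ for \eqref{eq:onestage}. Because $\mathbf{K}_2\subset\mathbb{R}^{n_1+n_2+p}$ is a compact Borel set and $\pi_{x_1,y}\mu_2=\mu_1\otimes\varphi$, standard disintegration on Polish spaces yields a Borel family of probability measures $\{\nu_{x_1,y}\}_{(x_1,y)\in\mathbf{K}_1\times\mathbf{Y}}$ on $\mathbb{R}^{n_2}$, with $\nu_{x_1,y}$ supported on $\{x_2:(x_1,x_2,y)\in\mathbf{K}_2\}$ for $(\mu_1\otimes\varphi)$-a.e.\ $(x_1,y)$, such that
\begin{equation*}
\int_{\mathbf{K}_2} f_2\,d\mu_2 \;=\; \int_{\mathbf{K}_1\times\mathbf{Y}}\!\!\Bigl(\int_{\mathbb{R}^{n_2}} f_2(x_1,x_2)\,d\nu_{x_1,y}(x_2)\Bigr) d(\mu_1\otimes\varphi)(x_1,y).
\end{equation*}
For each $(x_1,y)$, the inner integral is $\ge v^*(x_1,y)$ by definition of the second-stage optimum, so by Fubini the right-hand side is bounded below by $\int_{\mathbf{K}_1}\mathbf{E}_\varphi v^*(x_1,y)\,d\mu_1(x_1)$. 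Adding $\int f_1\,d\mu_1$ and using that $\mu_1$ is a probability measure on $\mathbf{K}_1$ gives $\int f_1\,d\mu_1+\int f_2\,d\mu_2 \ge \int [f_1(x_1)+\mathbf{E}_\varphi v^*(x_1,y)]\,d\mu_1(x_1)\ge \rho$. Taking the infimum over feasible $(\mu_1,\mu_2)$ yields $\rho_{12}\ge \rho$.

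The principal obstacle is the disintegration argument: one must ensure the conditional measures $\nu_{x_1,y}$ exist as a jointly measurable family and concentrate on the $(x_1,y)$-slice of $\mathbf{K}_2$. This is routine given compactness of $\mathbf{K}_2$ and the fact that $\mathbb{R}^{n_2}$ is Polish (which provides the needed regular conditional probabilities), but it is the single nontrivial measure-theoretic input. Everything else—the Dirac construction, the projection identity, and the use of Assumption \ref{as:poly2} to invoke measurability of $x_2^*$—is direct.
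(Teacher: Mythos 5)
Your proof is correct, but it takes a genuinely different route from the paper's. The paper does not argue via two inequalities and disintegration: it first rewrites the first stage as a GMP over $\mu_1$ (its equation (\ref{eq:genfirst})), expresses the expected recourse cost $\int_{\mathbf{K}_1\times\mathbf{Y}} v^*\,d(\mu_1\otimes\varphi)$ as the optimal value of an infinite-dimensional LP over continuous functions, dualizes that LP back into the marginal-constrained second-stage GMP over $\mu_2$ with zero duality gap (both steps by citing results of Lasserre), merges the two $\min$ operators to obtain (\ref{eq:onestage}), and then gets $\rho_1=\rho$ and the Dirac claim of part (b) by invoking a general GMP equivalence theorem. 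Your argument replaces all of this duality machinery with a single application of the disintegration theorem (regular conditional probabilities of $\mu_2$ given the marginal $\mu_1\otimes\varphi$, concentrated on the $(x_1,y)$-slices of $\mathbf{K}_2$) for the lower bound, and an explicit pushforward of $\varphi$ under $y\mapsto(x_1^*,x_2^*(x_1^*,y),y)$ for the upper bound; this is self-contained, and it produces the Dirac optimizer of part (b) constructively rather than by citation. Both proofs need the measurable selector $x_2^*$, which the paper establishes right after Assumption \ref{as:poly2}. One small imprecision on your side: under Assumption \ref{as:poly2} the map $x_1\mapsto\mathbf{E}_\varphi\bigl(v^*(x_1,y)\bigr)$ is only guaranteed to be lower semicontinuous (the feasible-set correspondence has closed graph but need not be inner semicontinuous), not continuous; lower semicontinuity together with compactness of $\mathbf{K}_1$ still gives attainment of the minimum, and for the inequality $\rho_{12}\le\rho$ alone an $\epsilon$-minimizer would also suffice, so nothing in your argument breaks.
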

\if\thesismode 0
The proof is found in \if\thesismode1Appendix \fi\ref{app:gmp_equiv}. In \if\thesismode1Appendix \fi\ref{dualproblem}, we provide the dual of (\ref{eq:onestage}) over bounded continuous functions and show that it approximates the value function $v(x_1,y)$.

\else
	
\fi
	The two-stage generalized moment problem (\ref{eq:onestage}) can be simplified further. 
	%Let $\mu$ be a probability measure supported on 
%	\begin{equation}
	%	\begin{aligned}
	%		\mathbf{K}=\{(x_1,x_2,y) \in \mathbb{R}^{n_1}\times\mathbb{R}^{n_2}&\times\mathbb{R}^{p}:x_1 \in \mathbf{K}_1;y %\in \mathbf{Y};\\
	%		&h_i(x_1,x_2,y)=0, i=1,\ldots,N_h;\\
	%		&g_j(x_1,x_2,y) \geq 0, j=1,\ldots,N_g\}
	%	\end{aligned}
%	\end{equation}
%The probability measure $\mu\in$ combines the measures $\mu_1$ and $\mu_2$. , defined by $(\pi_{x_1,y}\mu)(\mathit{B})=\mu((\mathbb{R}^{n_2} \times \mathit{B})~\cap \mathbf{K}_2)$ for all Borel subsets $\mathit{B}$ of $\mathbf{K}_1\times\mathbf{Y}$ and $(\pi_{x_1}\mu)(\mathit{C})=\mu((\mathit{C}\times\mathbb{R}^{n_2} \times\mathbb{R}^p )~\cap \mathbf{K}_2)$ for all Borel subsets $\mathit{C}$ of $\mathbf{K}_1$,
	Let $\mu$ denote a probability measure supported on $\mathbf{K}_2$, combining the first and second stage. Introducing the operators $\pi_{x_1,y}\mu$ and $\pi_{x_1}\mu$ that project the measure $\mu$ onto $\mathbf{K}_1\times\mathbf{Y}$ and $\mathbf{K}_1$ respectively, we obtain:
	\begin{subequations}\label{eq:genforsdp}
	 	\begin{align}
	 	\rho_{12}=\min_{\mu \in \mathcal{M}(\mathbf{K}_2)_+}\int_{\mathbf{K}_2} &f_1+f_2\,d\mu\\
	 	\textrm{s.t.}\quad&\pi_{x_1,y}\mu=\pi_{x_1}\mu\otimes\varphi.\label{eq:genforsdp_nonant}%\quad &\int_\mathbf{K} d\mu =1
	 	\end{align}
	 \end{subequations}
	 The constraint (\ref{eq:genforsdp_nonant}) implies the fixed marginal constraint $\pi_y\mu=\varphi$, where the mapping $\pi_{y}\mu$ projects the measure $\mu$ on $\mathbf{Y}$, because $(\pi_{x_1,y}\mu)(\mathbb{R}^{n_1}\times\mathit{B})=\mu((\mathbb{R}^{n_1}\times\mathbb{R}^{n_2} \times \mathit{B})~\cap \mathbf{K})=\varphi(B)$ for all Borel subsets $B$ of $\mathbf{Y}$. This also makes the constraint $\int_{\mathbf{K}_1} d\mu =1$ redundant since $\varphi(\mathbf{Y})=1$ and $\pi_y\mu=\varphi$ together imply that $\mu(\mathbf{K}_1)=1$. Thus, we can interpret (\ref{eq:genforsdp}) as a \ac{GMP} with fixed marginals and an additional product measure constraint. To sum up, we have reformulated a two-stage stochastic program as a single-stage \ac{GMP}.

	\section{Tractable relaxation of GMP}\label{twostagemoment}
The infinite-dimensional \ac{GMP} (\ref{eq:genforsdp}) can be approximated by an \ac{SDP} relaxation involving a finite number of moments of $\mu$ \cite{Lasserre2014}. \acp{GMP} with constraints on certain marginal distributions have been approximated using \ac{SDP} relaxations in the literature for other purposes \cite{Lasserre2010,Hohmann2018_1}, and the derivation in this section leading to \eqref{eq:twostagesdp} and \eqref{eq:sparsesdp2} is closely related to these. There is a trade-off between the accuracy of the approximation and the computational cost involved, and this is controlled by the choice of \emph{relaxation degree} $k\in\mathbb{N}$. The lowest admissible degree is determined by the degrees of the polynomial functions defining the problem. Specifically, $k$ must satisfy $k\geq \max(\lceil \textrm{deg}\,f/2 \rceil,\max_l d_{q_l},\max_i d_{h_i},\max_j d_{g_j})$, where $d_{q_l}=\lceil \textrm{deg}\,q_l/2 \rceil, l=1,\ldots,N_q$, $d_{h_i}=\lceil \textrm{deg}\,h_i/2 \rceil, i=1,\ldots,N_h$, $d_{g_j}=\lceil \textrm{deg}\,g_j/2 \rceil, j=1,\ldots,N_g'$, ``$\textrm{deg}$'' is the degree of a polynomial, and $\lceil b \rceil$ denotes the ceiling of a real number $b$ (smallest integer greater than or equal to the number $b$). 

\subsection{Moment representation}
In the following, we describe the standard procedure to obtain an equivalent representation of \acp{GMP} such as (\ref{eq:genforsdp}) in terms of an infinite sequence of moments \cite{Lasserre2014} and derive its relaxation of degree $k$.

	First we describe the decision variables and parameters of the moment relaxation. Let $\alpha=(\alpha_1,\ldots,\alpha_{n_1})\in\mathbb{N}^{n_1}$, $\beta=(\beta_1,\ldots,\beta_{n_2})\in\mathbb{N}^{n_2}$ and $\gamma=(\gamma_1,\ldots,\gamma_p)\in\mathbb{N}^p$ be the integer vectors of dimension $n_1$, $n_2$ and $p$ serving as multi-indices and define $m_{\alpha\beta\gamma}$ as moments of the probability measure $\mu$ on $\mathbf{K}_2$ by
	\begin{equation}
	m_{\alpha\beta\gamma}:=\int_{\mathbf{K}_2} x_1^\alpha x_2^\beta y^\gamma  d\mu,
	\end{equation}
	where, following convention, the shorthand symbols $x_1^\alpha:=x_{1,1}^{\alpha_1} x_{1,2}^{\alpha_2}\ldots x_{1,n_1}^{\alpha_{n_1}}$, \mbox{$x_2^\beta:=x_{2,1}^{\beta_1} x_{2,2}^{\beta_2}\ldots x_{2,n_2}^{\beta_{n_2}}$} and $y^\gamma:=y_1^{\gamma_1} y_2^{\gamma_2}\ldots y_p^{\gamma_p}$ are used to represent the monomials. Since $\mu$ is a probability measure, we have $m_{000}=\int_{\mathbf{K}_2}d\mu = 1$. Let $\mathbf{m}$ be a vector containing all the moments ($m_{\alpha\beta\gamma}$) up to degree $2k$ such that $\sum_{t=1}^{n_1} \alpha_{t}+\sum_{t=1}^{n_2} \beta_{t}+\sum_{t=1}^p \gamma_t \leq 2k$. The moments of the exogenous parameter $y$ distributed according to the probability measure $\varphi$ are denoted $z_\gamma=\int_{\mathbf{Y}}y^\gamma d\varphi$, with $z_{0}=1$.
    
 Secondly, we define an operator to represent the objective function in terms of moments of $\mu$. Let $\mathbb{R}_k[x_1,x_2,y]$ be the ring of polynomials of degree at most $k$ in $(x_1,x_2,y)$. For any moment vector $\mathbf{m}$, we can define an associated linear mapping $L_{\mathbf{m}}:\mathbb{R}_k[x_1,x_2,y]\rightarrow\mathbb{R}$,
	\begin{equation}
  u=\sum_{\alpha\beta\gamma}u_{\alpha\beta\gamma}x_1^\alpha x_2^\beta y^\gamma \mapsto L_{\mathbf{m}}(u)=\sum_{\alpha\beta\gamma}u_{\alpha\beta\gamma}m_{\alpha\beta\gamma},
	\end{equation}
for any $u\in\mathbb{R}_k[x_1,x_2,y]$. Using this definition, the objective $\int_{\mathbf{K}_2} f_1+f_2\,d\mu$ can be written equivalently in terms of moments as $L_{\mathbf{m}}(f_1+f_2)$.

Thirdly, we use the operator $L_\mathbf{m}$ to enforce the support of the measure $\mu$, i.e.~$\mu \in \mathcal{M}(\mathbf{K}_2)_+$. Applying $L_\mathbf{m}$ to a polynomial $u$ of degree $k$ defines a positive semi-definite matrix $M_k(\mathbf{m})$, the so called \emph{moment matrix}:
	\begin{equation}
	L_{\mathbf{m}}(u^2)=\mathbf{u}^T M_k(\mathbf{m}) \mathbf{u},
	\end{equation}
	where $\mathbf{u}$ is the vector of coefficients of $u$ and $M_k(\mathbf{m})$ comprises entries $m_{\alpha\beta\gamma}$ of $\mathbf{m}$. The non-negativity of $u^2$ implies that the moment matrix is symmetric positive semidefinite. The positive semidefinite \emph{localizing matrices} $M_{k-d_{q_l}}(q_l\mathbf{m})$ and $M_{k-d_{g_j}}(g_j\mathbf{m})$ enforce inequality constraints $q_l(x_1)\geq 0, l=1,\ldots,N_q$ and $g_j(x_1,x_2,y)\geq 0, j=1,\ldots,N_g'$ and are derived in the same way:
	\begin{equation}
	\begin{aligned}
		L_{\mathbf{m}}(q_lu^2)=\mathbf{u}^T M_{k-d_{q_l}}(q_l\mathbf{m}) \mathbf{u}\geq 0,\\
		L_{\mathbf{m}}(g_ju^2)=\mathbf{u}^T M_{k-d_{g_j}}(g_j\mathbf{m}) \mathbf{u}\geq 0.
	\end{aligned}
	\end{equation}
	The entries of $M_{k-d_{q_l}}(q_l\mathbf{m})$ and $M_{k-d_{g_j}}(g_j\mathbf{m})$ are linear combinations of the moments $m_{\alpha\beta\gamma}$. The entries of the localizing matrices $M_{k-d_{h_i}}(h_i\mathbf{m})$ enforcing equality constraints $h_i(x,y)=0, i=1,\ldots,N_h$, defined as $L_{\mathbf{m}}(h_i u^2)=\mathbf{u}^T M_{k-d_{h_i}}(h_i\mathbf{m})$, must be equal to zero since an equality constraint can be equivalently expressed as two reverse inequalities. This constraint is written as $M_{k-d_{h_i}}(h_i\mathbf{m})=0$. 
    A sequence of moments $\mathbf{m}$ has a representing finite Borel measure $\mu \in \mathcal{M}(\mathbf{K}_2)_+$ if and only if the moment matrix and the localizing matrices are positive semi-definite for all $k\in\mathbb{N}$ \cite[Theorem 3.8]{Lasserre2014}.
    
Finally, we note that since the measure $\mu$ is supported on a compact set, it is completely determined by its (infinite sequence of) moments. Thus, the product measure constraint $\pi_{x_1,y}\mu=\pi_{x_1}\mu\otimes\varphi$ can be written equivalently as an infinite list of moment constraints, $\int_{\mathbf{K}_2}x_1^\alpha y^\gamma d\mu=\int_{\mathbf{K}_2}x^\alpha d\mu\int_{\mathbf{Y}}y^\gamma d\varphi$, or in shorthand $m_{\alpha0\gamma}=m_{\alpha00}z_{\gamma}$, for all $(\alpha,\gamma) \in \mathbb{N}^{n_1}\times\mathbb{N}^{p}$. As we limit ourselves to a finite truncation of these moments, we only enforce these constraints for moments of $\mu$ up to degree $2k$.     
     
     Based on this moment representation of \eqref{eq:genforsdp}, the moment relaxation of degree $k$ is an SDP of the form
	\begin{subequations}\label{eq:twostagesdp}
		\begin{align}
		\rho_k=\min_{\mathbf{m}}\enskip & L_{\mathbf{m}}(f_1+f_2)\\
		\text{s.t.} \quad & M_k(\mathbf{m})\succeq 0,\\
		&M_{k-d_{q_l}}(q_l\mathbf{m})\succeq 0, \quad \, l={1,\ldots N_q,}\label{eq:twostagesdplocal0}\\
		&M_{k-d_{h_i}}(h_i\mathbf{m})=0, \quad i={1,\ldots N_h,}\label{eq:twostagesdplocal1}\\
		&M_{k-d_{g_j}}(g_j\mathbf{m})\succeq 0, \quad j={1,\ldots N_g',}\label{eq:twostagesdplocal2}\\
		&m_{\alpha0\gamma}=m_{\alpha00}z_{\gamma}, \qquad\forall (\alpha,\gamma) \in \mathbb{N}^{n_1}\times\mathbb{N}^{p},\sum_{t=1}^{n_1} \alpha_{t}+\sum_{t=1}^p \gamma_t \leq 2k,\label{eq:nonanticip}
		\end{align}
	\end{subequations} 
	where the notation $A \succeq 0$ indicates that matrix $A$ must be positive semidefinite.
    
\subsection{Sparse representation}

	The dimension  of the semi-definite constraints, given by $\binom{n_1+n_2+p+k}{k} \times \binom{n_1+n_2+p+k}{k}$, grows quickly in $n_1$, $n_2$, and $p$ and represents the primary computational bottleneck for SDP solvers. However, in cases where the underlying problem has a natural sparsity structure (e.g.~a sparse network graph), the computational cost can be significantly reduced for a given relaxation degree $k$. The approach proposed in \cite{Waki2006} exploits the sparsity structure of the set $\mathbf{K}_2$ and the polynomials $f_1$ and $f_2$ to replace each large moment and localizing matrix in (\ref{eq:twostagesdp}) by multiple but significantly smaller ones. We now apply this decomposition to problem \eqref{eq:genforsdp}.
	
	As in \cite{Hohmann2018_1}, define $J$ as the set of all monomials contained in $q_1(x,y),\ldots,q_{N_q}(x_1)$, $h_1(x_1,x_2,y),\allowbreak\ldots,h_{N_h}(x_1,x_2)$, $\allowbreak g_1(x_1,x_2,y),\allowbreak\ldots,g_{N_g'}(y)$, $f_1(x_1)$ and $f_2(x_1,x_2)$. A subset of monomials of $J$ with index $s\in\{1,\ldots,N_I\}$ only involves a subset $I_s$ of the variables $\{x_{1,1},\ldots,x_{1,n_1},x_{2,1},\ldots,x_{2,n_2},y_1,\ldots,y_p\}$. Let $n_{1,s}$, $n_{2,s}$ and $p_s$ be the cardinality of $I_s$ with respect to $x_1$, $x_2$ and $y$. Define the sets $\mathcal{Q}_{s} \subset \{1,\ldots,N_q\}$, $\mathcal{H}_{s} \subset \{1,\ldots,N_h\}$ and $\mathcal{G}_{s}\subset\{1,\ldots,N_g'\}$ as the sets of indices of constraints $q_1(x_1),\ldots,q_{N_q}(x_1)$;$\allowbreak h_1(x_1,x_2,y),\allowbreak\ldots,h_{N_h}(x_1,x_2,y)$; and $\allowbreak g_1(x_1,x_2,y),\ldots,g_{N_g'}(y)$ that include at least one variable of $I_s$.
	
	If the collection $\{I_1,\ldots,I_{N_I}\}$ satisfies the \emph{running intersection property} defined by:
	\begin{equation}\label{eq:RIP}
	\text{For each $s=1,\ldots,N_I-1$,}\quad I_{s+1}\cap\bigcup_{t=1}^s I_t\subseteq I_z \quad \text{for some}\enskip z\leq s,
	\end{equation}
	a so-called \emph{multi-measures} moment problem equivalent to (\ref{eq:genforsdp}) can formulated by virtue of \cite[Theorem 4.6]{Lasserre2014}. Its corresponding SDP relaxation at level $k$ is given by:
	\begin{subequations}\label{eq:sparsesdp2}
		\begin{align}
		\rho^{\rm sp}_k =\min_{\mathbf{m}}\quad & L_{\mathbf{m}}(f_1)+L_{\mathbf{m}}(f_2)\label{eq:sparsesdpobj2}\\
		\text{s.t.}\quad & M_k(\mathbf{m},I_s)\succeq 0\label{eq:sparsesdppsd2},\hspace{2.15cm} s=1,\ldots,N_I,\\
		&M_{k-d_{q_l}}(q_l\mathbf{m},I_s)\succeq 0,\hspace{1.3cm}\forall l\in\mathcal{Q}_s,\,s=1,\ldots,N_I,\label{eq:sparsesdplocalq2}\\
		&M_{k-d_{h_i}}(h_i\mathbf{m},I_s)=0,\hspace{1.2cm}\forall i\in \mathcal{H}_s,\,s=1,\ldots,N_I,\\
		&M_{k-d_{g_j}}(g_j\mathbf{m},I_s)\succeq0,\hspace{1.2cm}\forall j\in\mathcal{G}_s,\,s=1,\ldots,N_I,\label{eq:sparsesdplocal2}\\
		&m_{\alpha 0\gamma}(I_s)=m_{\alpha 0 0}(I_s)z_{\gamma}(I_s),\,\hspace{0.2cm}\forall (\alpha,\gamma) \in \mathbb{N}^{n_{1,s}}\times\mathbb{N}^{p_s},\nonumber \\
        & \hspace{4.5cm}\sum_{t=1}^{n_{1,s}} \alpha_{t}+\sum_{t=1}^{p_s} \gamma_t \leq 2k,\, s=1,\ldots,N_I,\label{eq:nonanticip2}
		\end{align}
	\end{subequations}
	where $m_{\alpha 0\gamma}(I_s)$ are the moments of the first stage variables and exogenous parameters indexed by $I_s$, $M_k(\mathbf{m},I_s)$ are moment matrices constructed from first and second stage variables and exogenous parameters indexed by $I_s$; and $M_{k-d_{q_l}}(q_l\mathbf{m},I_s)$, $M_{k-d_{h_i}}(h_i\mathbf{m},I_s)$ and $M_{k-d_{g_j}}(g_j\mathbf{m},I_s)$ are localizing matrices for constraints indexed by $\mathcal{Q}_s$, $\mathcal{H}_s$ and $\mathcal{G}_s$. The product measure constraint is imposed through the moments indexed by $I_s$ in (\ref{eq:nonanticip2}).
	If the collection $\{I_1,\ldots,I_{N_I}\}$ satisfies (\ref{eq:RIP}), the sparse SDP relaxation (\ref{eq:sparsesdp2}) converges to the optimal solution of $(\ref{eq:genforsdp})$ \cite[Theorem 4.7]{Lasserre2014}, i.e.~$\lim_{k\rightarrow\infty} \rho^{\rm sp}_k=\rho_{12}$. We refer to \cite{Waki2006} for an efficient method to identify a collection $\{I_1,\ldots,I_{N_I}\}$ satisfying (\ref{eq:RIP}). The smaller the cardinalities of the subsets $I_s$, the lower the dimensions of the moment matrices given by $\binom{n_{1,s}+n_{2,s}+p_s+k}{k} \times \binom{n_{1,s}+n_{2,s}+p_s+k}{k}$ at level $k$, and the smaller the computational bottleneck.

\section{District heating network model}\label{dhmodel}
	
	In this section, we describe the steady-state model of a district heating network and its polynomial approximation. The objective is to model accurately the steady-state hydraulic and thermal losses of the system.\footnote{Although we are interested in operating strategies for varying demand levels, we assume that demand variation is slow compared to the pipe flow dynamics, such that the steady-state assumption is still valid over all demand levels.} For this purpose, we only model the primary network and assume that the secondary networks, consisting of the internal heat distribution to a building or cluster of buildings, are controlled separately. We describe the topology of a district heating network using a set of mixing nodes $\mathcal{N}_M=\{1,\ldots,N_M\}$ that are connected by a set of piping branches $\mathcal{B}_P=\{1,\ldots,N_P\}$, load branches $\mathcal{B}_L=\{1,\ldots,N_L\}$ and generator branches $\mathcal{B}_G=\{1,\ldots,N_G\}$. A network schematic illustrating the network components and the hydraulic and thermal modelling variables is shown in Fig.~\ref{fig:network_schematic}.
\begin{figure}
	\centering
	\includegraphics[scale=0.5]{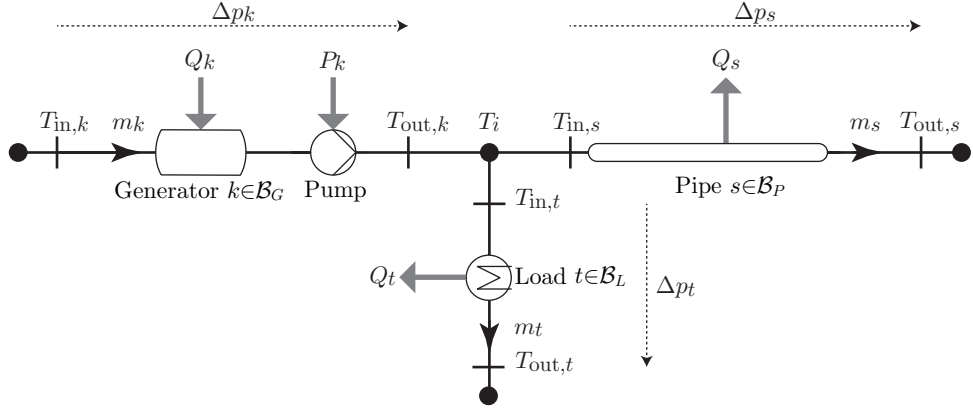}\caption[District heating network components and modelling variables]{Illustration of a generator branch $k\in\mathcal{B}_G$, a pipe branch $s\in\mathcal{B}_P$ and a load branch $t\in\mathcal{B}_L$ connected to a node $i$. For a branch $j\in\mathcal{B}$, $T_{\textrm{in},j}$ and $T_{\textrm{out},j}$ denote inlet and outlet temperatures, $m_j$ the mass flow rate, $\Delta p_j$ the pressure difference and $Q_j$ the thermal power exchange. $P_k$ denotes the pumping power and $T_i$ the mixed temperature at node $i$.}\label{fig:network_schematic}
\end{figure}
\subsection{Hydraulic model}

	We start by describing the hydraulic aspects of the steady-state model used in \cite{Liu2013a}. Let $m_{j}$ denote the mass flow in units [kg/s] in a pipe, load or generator branch indexed by $j\in\mathcal{B}$, where $\mathcal{B}:=\mathcal{B}_P\cup\mathcal{B}_L\cup\mathcal{B}_G$. For each branch, we define a nominal flow direction and the mass flow $m_{j}$ as positive when the flow coincides with the nominal flow direction and negative otherwise. For a node $i\in\mathcal{N}_m$, let $\mathcal{B}^+_{i}$ denote the branches flowing into node $i$, $\mathcal{B}^-_{i}$ the branches flowing out of node $i$. Mass flow conservation requires that the sum of incoming flows be equal to the sum of outgoing flows at each node:
	\begin{equation}\label{eq:massflowconservation}
		\sum_{j\in\mathcal{B}^+_{i}} m_{j}-\sum_{j\in\mathcal{B}^-_{i}} m_{j}=0,\quad\forall i\in\mathcal{N}_M.
	\end{equation}
	
%	The pressure loss $\Delta p_s$ in [m] along a pipe segment $s\in\mathcal{B}_P$ is modelled as
%	\begin{equation}\label{eq:pressuredrop}
%		\Delta p_s=K_{s}|m_{s}|m_{s}, \quad\forall s\in\mathcal{B}_{P},
%	\end{equation}
%	where $K_{s}$ is the resistance coefficient \cite{Liu2013a}.
%	\begin{equation}
%		K_{s}=\frac{8L_{s}}{D_{s}^5\rho^2\pi^2g}f_{s},
%	\end{equation}
%	where $D_{s}$ is the pipe diameter in [m], $L_{s}$ the pipe length in [m], $\rho$ the water density in [kg/m$^3$], $g$ the gravitational acceleration in [m/s$^2$] and $f_{s}$ the friction factor. Let $Re_s$ be the Reynolds number given by
%	\begin{equation}
%	Re_{s}=\frac{m_{s}}{\mu\rho\pi D_{s}/4},
%	\end{equation}
%	where $\mu$ is the kinematic viscosity of water in [m$^2$/s]. In a turbulent regime, we have $Re_{s}>4000$ \cite{Liu2013a}. Assuming a turbulent flow regime, the friction factor $f_{s}$ is governed by 
%	\begin{equation}\label{eq:friction}
%		\frac{1}{\sqrt{f_{s}}}=-2\log_{10}\Bigg(\frac{\epsilon_{s}/D_{s}}{3.7}+\frac{2.51}{Re_{s}\sqrt{f_{s}}}\Bigg),
%	\end{equation}
%	where $\epsilon_{s}$ is the roughness of the pipe in [m].	Equation (\ref{eq:friction}) cannot be incorporated in a polynomial optimization framework. However, the friction factor $f_{s}$ changes only moderately and is often assumed to be constant (e.g.~\cite{AWAD2009}, \cite{Tang2014} \cite{vanderheijde2017}). 

	The pressure drop $\Delta p_s$ in [Pa] along a pipe segment $s\in\mathcal{B}_P$ is approximated as
	\begin{equation}\label{eq:pressuredrop_fitted}
	\Delta p_s=a_{s}|m_{s}|m_{s}+b_{s}m_{s},\quad\forall s\in\mathcal{B}_{P},
	\end{equation}
	where $a_{s}\in\mathbb{R}$ and $b_{s}\in\mathbb{R}$ are friction coefficients. The hydraulic equations approximated by (\ref{eq:pressuredrop_fitted}) are detailed in \if\thesismode1Appendix \fi\ref{app:pressureloss}.
	
	The pressure drop $\Delta p_t$ in [Pa] along a consumer branch $t\in\mathcal{B}_L$ is modelled as
	 \begin{equation}\label{eq:pressuredrop_substation}
	 	\Delta p_t\geq\kappa_{t}m_{t}^2,\quad\forall t\in\mathcal{B}_{L},
	 \end{equation}
	where the coefficient $\kappa_{t}$ is calculated so that the maximum pressure drop at maximum mass flow is $50$kPa \cite{Pirouti2013}. The slack in \eqref{eq:pressuredrop_substation} is the pressure drop over a valve installed in the branch. 
    
Let $\Delta p_{k}$ denote the pressure difference in [Pa] of the generator branch $k\in\mathcal{B}_G$. The nodes $\mathcal{N}_M$ and branches $\mathcal{B}$ form a hydraulic circuit, consisting of $N_{\text{loop}}$ hydraulic loops indexed by $l\in\{1,\ldots,N_{\text{loop}}\}$. Let $\mathcal{B}_{P,l}, \mathcal{B}_{L,l}$ and $\mathcal{B}_{G,l}$ define the sets of pipe, load and generator branches in the loop $l$.  Analogously to Kirchhoff's voltage law, the sum of pressure differences over all branches forming a closed hydraulic loop must be zero:
	\begin{equation}\label{eq:hydraulicloop}
		\sum_{k\in\mathcal{B}_{G,l}} \Delta p_{k}-\sum_{s\in\mathcal{B}_{P,l}} \Delta p_{s}-\sum_{t\in\mathcal{B}_{L,l}} \Delta p_{t}=0,\quad l=1,\ldots,N_{\text{loop}}.
	\end{equation}
    
	 The electric pumping power in [W] is given by \cite{Pirouti2013}
	\begin{equation}\label{eq:pumpingpowereq}
	P_{k}=\frac{\Delta p_{k} m_{k}}{\nu_{\rm pump} \rho} , \quad\forall k\in\mathcal{B}_{G},
	\end{equation}
	where $\nu_{\rm pump}$ is the pump efficiency and $\rho$ the density of water in [kg/m$^3$].% and $g$ the gravitational acceleration in [m/s$^2$].

\subsection{Thermal model}

	The nodal temperatures are determined by the thermal model of the district heating network. Let $T_{\textrm{out},s}$ and $T_{\textrm{in},s}$ denote the outlet and inlet temperature of a pipe branch in [$^\circ$C]. We approximate the temperature loss in a pipe segment by
%	The outlet temperature is given by \cite{Liu2014}:
%	\begin{equation}\label{eq:thermalloss}
%	T_{\text{out},s}=(T_{\text{in},s}-T_{\text{a}})e^{\frac{-L_s\lambda_s}{c_p m_{s}}}+T_{\text{a}},
%	\end{equation}
%	where $L_s$ is the pipe length in [m], $\lambda_s$ the per unit length heat transfer coefficient of the pipe material in [W/($^\circ$C m)], $c_p$ the specific heat capacity of water in [J/(kg $^\circ$C)] and $T_a$ the ground temperature in $^\circ$C. The exponential expression in (\ref{eq:thermalloss}) cannot be incorporated in a polynomial framework. As suggested in \cite{AWAD2009} and \cite{Tang2014}, we approximate the exponential $e^{b y}$ in (\ref{eq:thermalloss}), where  $b=\frac{-L_s\lambda_s}{c_p}$ and $y=\frac{1}{m_{s}}$, by the first two terms of its Taylor series $e^{b a}+\frac{d(e^{b y})}{dy}(a)(y-a)$ around a point $a\in\mathbb{R}$, where $\frac{d(e^{b y})}{dy}(a)$ is the derivative with respect to $y$ at the point $a$. Inserting the first order approximation in (\ref{eq:thermalloss}) and multiplying both sides by $m_{s}$, we obtain the polynomial expression
	\begin{equation}\label{eq:thermal_approx}
	T_{\text{out},s}m_{s}=(T_{\text{in},s}-T_{\text{amb}})(c_s m_s-d_s)+T_{\text{amb}}m_{s},
	\end{equation}
    where $c_s\in\mathbb{R}$ and $d_s\in\mathbb{R}$ are loss coefficients derived in \if\thesismode1Appendix \fi\ref{app:thermalloss}.
% 	\begin{equation}\label{eq:thermal_approx}
% 	T_{\text{out},s}m_{s}=(T_{\text{in},s}-T_{\text{a}})\Bigg(e^{-\frac{L_s\lambda_s}{c_p} a}m_{s}-\frac{L_s\lambda_s}{c_p}e^{-\frac{L_s\lambda_s}{c_p} a}(1-a  m_{s})\Bigg)+T_{\text{a}}m_{s},
% 	\end{equation}
% 	where $L_s$ is the pipe length in [m], $\lambda_s$ the per unit length heat transfer coefficient of the pipe in [W/($^\circ$C m)], $c_p$ the specific heat capacity of water in [J/(kg $^\circ$C)], $T_a$ the ground temperature in $^\circ$C. 
    
%     The approximation point $a\in\mathbb{R}$ is selected based on the range of the mass flow rate. 
	
	A mixing node $i\in\mathcal{N}_M$ can have multiple inflows with different temperatures. Assuming perfect mixing and applying the law of heat flow conservation, the mixed temperature $T_i$ is governed by:
	 \begin{equation}\label{eq:mixing}
	 \begin{aligned}
			T_{i}\sum_{j\in\mathcal{B}^+_{i}}m_{j}=\sum_{j\in\mathcal{B}^+_{i}}T_{\text{out},j}m_{j},\quad\forall i\in\mathcal{N}_M.
	 \end{aligned}
	\end{equation}
	The temperature of flows leaving node $i$ into links $j$ is equal to the mixed temperature $T_i$:
	\begin{equation}\label{eq:outflows}
	\begin{aligned}
		T_{\textrm{in},j}=T_i,\quad\forall j\in\mathcal{B}^-_i.
	\end{aligned}
	\end{equation}
	Let $T_{\text{in},t}$ and $T_{\text{out},t}$ denote the supply and return temperature of the load branch in [$^\circ$C]. The heat flow exchange $Q_{t}$ in [W] in the load branch $t\in\mathcal{B}_L$ is given by:
	\begin{equation}\label{eq:thermal_load}
	Q_{t}=c_p m_{t} (T_{\text{in},t}-T_{\text{out},t}).
	\end{equation}
	Let $T_{\text{out},k}$ and $T_{\text{in},k}$ denote the supply and return temperature of the generator branch in [$^\circ$C]. The generated heat $Q_{k}$ in [W] in generator branch $k\in\mathcal{B}_G$ is given by:
	\begin{equation}\label{eq:thermal_gen}
	Q_{k}=c_p m_{k} (T_{\text{out},k}-T_{\text{in},k}).
	\end{equation}
The thermal loss $Q_{s}$ in [W] of a pipe branch $s\in\mathcal{B}_P$ calculated as
	\begin{equation}\label{eq:pipe_loss}
	Q_{s}=c_p m_{s}(T_{\text{in},s}-T_{\text{out},s}).
	\end{equation}
\subsection{Operating constraints}

	Most equipment, in particular pipes, pumps and generators, will degrade unless certain operating conditions are maintained. For this reason, we impose technical bounds on temperature, mass flow and pressure variables:
	\begin{subequations}\label{eq:syslimits}
	\begin{align}
		&\underline{m}_j\leq m_j \leq \overline{m}_j, \quad\forall j\in\mathcal{B},\label{eq:syslimitsa}\\	
		&\Delta\underline{p}_j\leq \Delta p_j \leq \Delta\overline{p}_j, \quad\forall j\in\mathcal{B},\label{eq:syslimitsb}\\
		&\underline{T}_i\leq T_i \leq \overline{T}_i, \quad\forall i\in\mathcal{N}_M,\label{eq:syslimitsc}\\
		&\underline{T}_k\leq T_{\textrm{out},k} \leq \overline{T}_k, \quad\forall k\in\mathcal{B}_G,\label{eq:syslimitsd}
	\end{align}
	\end{subequations}
	
	\subsection{Component sizing}\label{designcases}

	The operational characteristics of district heating networks depend not only on the operating strategy but also on the network design. Therefore, we conduct an analysis for multiple networks, for which we briefly describe the design procedure in this section. The topology and pipe lengths of the primary network as well as the maximum heat demand in the load branches are assumed to be given. In \cite{Pirouti2013}, various design cases are studied with regard to investment and operating cost. Using the same design approach, we have the following design parameters:
	\begin{itemize}
		\item Design supply and return temperatures $\hat{T}_{\textrm{in},s}$ and $\hat{T}_{\textrm{out},s}$ of the load branch in [$^\circ$C]
		\item Target pipe pressure loss $\Delta \hat{p}_{s}$ in [Pa/m]
	\end{itemize}
	There is a wide range of target pressure loss values used to design networks. A summary can be found in \cite[Table 1]{Pirouti2013}. We can calculate the maximum load mass flow rate using the design supply and return temperature, and the maximum heat demand $Q_{\textrm{max},s}$ of each load branch \cite{Pirouti2013}:
	\begin{equation}
			m_{\textrm{max},s}=\frac{Q_{\textrm{max},s}}{c_p(\hat{T}_{\textrm{in},s}-\hat{T}_{\textrm{out},s})}.
	\end{equation}
	We find the maximum mass flow in each pipe segment by solving the linear system of equations (\ref{eq:massflowconservation})\footnote{If there are multiple generators, one generator can be defined as the \emph{Slack}, balancing (\ref{eq:massflowconservation}), while the mass flow in the remaining generator branches is fixed to the maximum.}. Given the target pressure loss, we can solve for the diameter using the following expression from \cite{Pirouti2013}
	\begin{equation}\label{eq:diameterdesign}
	D_{s}=\sqrt[5]{\frac{8m_{\textrm{max},s}^2 f_{s}}{(\frac{\Delta \hat{p}_{s}}{L_{s}})\rho\pi^2}} \, ,
	\end{equation}
	and equation (\ref{eq:friction}) in \if\thesismode1Appendix \fi\ref{app:pressureloss}.

\section{Optimal operating strategies} \label{sec:opstrat}

	When supplying heat to consumers, the heat can be regulated using two control variables: supply temperature and mass flow \cite{Gustafsson2010}. This leads to the following three strategies \cite{Pirouti2013}:
	\begin{itemize}
		\item Constant supply temperature, variable mass flow (CT-VF)
		\item Variable supply temperature, constant mass flow (VT-CF)
		\item Variable supply temperature, variable mass flow (VT-VF)
	\end{itemize}
	Most older district heating networks are controlled using VT-CF, whereas newer networks use CT-VF \cite{Duquette2016}. The VT-VF is used here as a benchmark case since it offers the most degrees of freedom. The supply temperature is controlled at the output of each thermal generation unit. The VT-VF and CT-VF strategies require variable-speed pumps, whereas constant-speed pumps are sufficient for the VT-CF strategy.
  
  To evaluate the operational strategies, we compare them in terms of cost incurred by heat and hydraulic losses for a range of heating demand conditions. Since the pumps are electrically-powered and the thermal losses are compensated by burning fuel, the operating cost is a function of the electricity price $c_{\rm elec}$ and the fuel price $c_{\rm fuel}$. The heat demand $Q_{\rm heat}\in\mathbb{R}^{N_L}$, consisting of the individual heat loads $Q_t$, is assumed to follow a known probability distribution $\varphi$ on a known compact set $\mathbf{Y}_L$. For each strategy, we state an optimization problem that determines the operating set-points, namely supply temperature and mass flow, minimizing expected heat and hydraulic losses with respect to $\varphi$. The decision variables of this problem are inlet and outlet temperatures $T_{\textrm{in},j}$ and $T_{\textrm{out},j}$, mass flow rates $m_j$ and pressure drops $\Delta p_j$ for all branches $j$ in $\mathcal{B}$; and nodal temperatures $T_i$ for all nodes $i$ in $\mathcal{N}_M$.
  
  The network constraints described in Section \ref{dhmodel} cannot be directly integrated into the polynomial optimization framework \eqref{eq:prefix1}-\eqref{eq:prefix2}. Whereas the polynomial approximation (\ref{eq:thermal_approx}) is valid for positive and negative mass flow values, we need to determine the flow direction in advance to remove the absolute value operator in (\ref{eq:pressuredrop_fitted}) and to establish equations (\ref{eq:mixing}) and (\ref{eq:outflows}). This can be done easily for networks with tree topologies and a small number of generators, as in the example below. For meshed networks, we can fix the flow directions in each branch in advance at the cost of reducing the feasible space of operating decisions. Alternatively, we can model different inflow/outflow conditions of each mixing node using polynomial equations, although this would introduce additional computational complexity.

The optimal operational set-point for each load occurrence $Q_\textrm{heat}\in\mathbf{Y}_L$ is computed in the case of the VT-VF strategy using
	\begin{equation}\label{eq:vtvfoptim}
	\begin{aligned}
	\rho(Q_{\rm heat})=&\min_{\{T_{i}\}_{i\in\mathcal{N}_M},\{T_{\textrm{in},j},T_{\textrm{out},j},m_{j},\Delta p_j\}_{j\in\mathcal{B}} }\sum_{k\in\mathcal{B}_G}\frac{c_{\rm fuel}}{\nu_{\rm fuel}}Q_k+\sum_{k\in\mathcal{B}_G}c_{\rm elec}P_k\\
	&\text{s.t. (\ref{eq:massflowconservation})-(\ref{eq:thermal_gen}) and (\ref{eq:syslimits})},
	\end{aligned}
	\end{equation}
	where $\nu_{\rm fuel}$ is the fuel to heat efficiency. The expected cost $\mathbf{E}_{\varphi}(\rho(Q_{\rm heat}))$ over all instances of \eqref{eq:vtvfoptim} serves as a reference value to evaluate the performance of the CT-VF and VT-CF strategies. The computation of $\mathbf{E}_{\varphi}(\rho(Q_{\rm heat}))$ is detailed in Section \ref{casestudy}.

The optimal operational set-points of the VT-CF and CT-VF strategies are a solution to a two-stage stochastic program (\ref{eq:prefix1})-(\ref{eq:prefix2}) cast in two different ways. For the VT-CF strategy, the optimal pump mass flows are determined in the first stage:
	\begin{equation}\label{eq:vtcfoptim_1}
	\begin{aligned}
	\min_{\{m_k\}_{k\in\mathcal{B}_G}} &\mathbf{E}_\varphi(v(\{m_k\}_{k\in\mathcal{B}_G},Q_{\rm heat})) \\
	&\text{s.t. (\ref{eq:syslimitsa})}
	\end{aligned}
	\end{equation}
	where $v(\{m_k\}_{k\in\mathcal{B}_G},Q_{\rm heat})$ is the value function of the second stage problem:
	\begin{equation}\label{eq:vtcfoptim_2}
	\begin{aligned}
	v(\{m_k\}_{k\in\mathcal{B}_G},Q_{\rm heat})=&\min_{\substack{\{T_{i}\}_{i\in\mathcal{N}_M},\\ \{{T_{\textrm{in},j},T_{\textrm{out},j},\Delta p_j}\}_{j\in\mathcal{B}},\\ \{m_{j}\}_{j\in\mathcal{B}_P\cup\mathcal{B}_L}}} \sum_{k\in\mathcal{B}_G}c_{\rm elec}P_k+\sum_{k\in\mathcal{B}_G}\frac{c_{\rm fuel}}{\nu_{\rm fuel}}Q_k\\
	&\text{s.t. (\ref{eq:massflowconservation})-(\ref{eq:thermal_gen}) and (\ref{eq:syslimitsb})-(\ref{eq:syslimitsd})}.
	\end{aligned}
	\end{equation}
	For the CT-VF strategy, the optimal supply temperatures of the thermal generation units are determined in first stage:
	\begin{equation}\label{eq:ctvfoptim_1}
	\begin{aligned}
	\min_{\{T_{\textrm{out},k}\}_{k\in\mathcal{B}_G}} &\mathbf{E}_\varphi(v(\{T_{\textrm{out},k}\}_{k\in\mathcal{B}_G},Q_{\rm heat})) \\
	&\text{s.t. (\ref{eq:syslimitsd})},
	\end{aligned}
	\end{equation}
	where $v(\{T_{\textrm{out},k}\}_{k\in\mathcal{B}_G},Q_{\rm heat})$ is the value function of the second stage problem:
	\begin{equation}\label{eq:ctvfoptim_2}
	\begin{aligned}
	v(\{T_{\textrm{out},k}\}_{k\in\mathcal{B}_G},Q_{\rm heat})=&\min_{\substack{\{T_{i}\}_{i\in\mathcal{N}_M},\\ \{{T_{\textrm{in},j},m_j,\Delta p_j}\}_{j\in\mathcal{B}},\\ \{T_{\textrm{out},j}\}_{j\in\mathcal{B}_P\cup\mathcal{B}_L}}}\sum_{k\in\mathcal{B}_G}c_{\rm elec}P_k+ \sum_{k\in\mathcal{B}_G}\frac{c_{\rm fuel}}{\nu_{\rm fuel}}Q_k\\
	&\text{s.t. (\ref{eq:massflowconservation})-(\ref{eq:thermal_gen}) and (\ref{eq:syslimitsa})-(\ref{eq:syslimitsc})}.
	\end{aligned}
	\end{equation}	
	We use the \ac{SDP} relaxations described in Section \ref{twostagemoment} to approximate (\ref{eq:vtcfoptim_1})-(\ref{eq:vtcfoptim_2}) and (\ref{eq:ctvfoptim_1})-(\ref{eq:ctvfoptim_2}). The heat produced in the generator branches is the sum of all heating loads and pipe losses, hence minimizing the generator cost is equivalent to minimizing the thermal lossses. We compare the strategies in terms of the expected cost incurred by thermal losses $\mathbf{E}_\varphi (\sum_{s\in\mathcal{B}_P}\frac{c_{\rm fuel}}{\nu_{\rm fuel}}Q_s$), where $Q_s$ is the thermal loss in a pipe as defined in \eqref{eq:pipe_loss}, and the expected cost incurred by hydraulic losses $\mathbf{E}_\varphi (\sum_{k\in\mathcal{B}_G}c_{\rm elec}P_k)$.
	
\section{Numerical results}\label{casestudy}
We evaluate the operating strategies on the network presented in \cite{rees2011}. This case was studied in \cite{Pirouti2013} in terms of operational strategies. In contrast to \cite{Pirouti2013}, we study the VT-VF strategy using techniques approximating the global optimum, and rigorously optimize the set-points of CT-VF and VT-CF strategies. We follow the design procedure of Section \ref{designcases} to determine pipe diameters using a range of design temperatures and target pressure loss values from \cite{Pirouti2013}. The topology of the supply and return network is identical and shown in Fig. \ref{fig:networktopology}. There is a single generator connected to node $1$ and the generator branch is equipped with a pump. 
\begin{figure}
	\centering
	\includegraphics[trim={0 0cm 0 0},clip,scale=1]{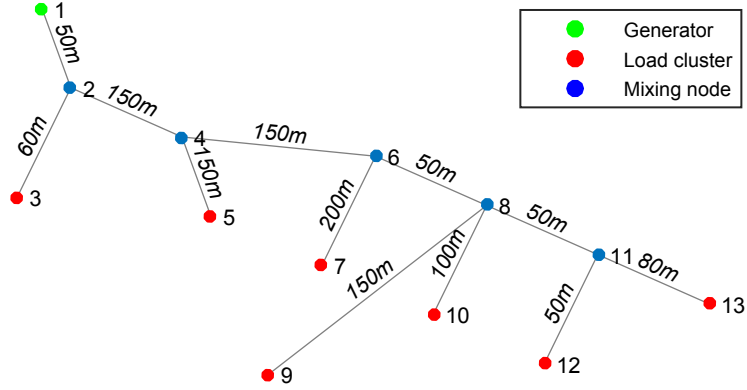}
	\caption[District heating network topology]{District heating network topology, adapted from \cite{rees2011}, representing the supply and return network}\label{fig:networktopology}
\end{figure}

Tables \ref{table:branchdata} and \ref{table:loaddata} summarize the network parameters from \cite{Pirouti2013} and \cite{rees2011} for branches and load nodes. We linearly approximate the values given in Table \ref{table:heattr} from \cite{Liu2013a} to compute the heat transfer coefficients for different pipe designs. Additional problem input data is given in Table \ref{table:inputdataobj}. 

\begin{table}
	\caption[Distrct heating network data: Pipe length and roughness]{Network data \cite{Pirouti2013,rees2011}: Pipe length $L_s$ and pipe roughness $\epsilon_s$}
	\label{table:branchdata}
	\begin{center}
		\begin{tabular}{ l l l l}
			From node& To node & $L_s$ [m] & $\epsilon_s$ [mm]\\ 
			\midrule
			1&2&50&0.4\\
			2&3&60&0.4\\
			2&4&150&0.4\\
			4&5&150&0.4\\
			4&6&150&0.4\\
			6&7&200&0.4\\
			6&8&50&0.4\\
			8&9&150&0.4\\
			8&10&100&0.4\\
			8&11&50&0.4\\
			11&12&50&0.4\\
			11&13&80&0.4\\
			\midrule
		\end{tabular}
	\end{center}
\end{table}
\begin{table}
	\caption[Distrct heating network data: Maximum load]{Network data \cite{rees2011}: Maximum load $Q_{\textrm{max},t}$ connected to node}
	\label{table:loaddata}
	\begin{center}
		\begin{tabular}{l l}
			Node& $Q_{\textrm{max},t}$ [kW]\\ 
			\midrule
			3&820\\
			5&1925\\
			7&770\\
			9&4025\\
			10&875\\
			12&2205\\
			13&560\\
			\midrule
		\end{tabular}
	\end{center}
\end{table}
\begin{table}
	\caption[Thermal transfer coefficients]{Thermal transfer coefficients $\lambda_s$ for a range of pipe diameters $D_s$}
	\label{table:heattr}
	\begin{center}
		\begin{tabular}{l l}
			$D_s$ [mm]& $\lambda_s$ [W/(m $^\circ$C)]\\
			\midrule
			32&0.189\\
			40&0.210\\
			50&0.219\\
			65&0.236\\
			80&0.278\\
			100&0.327\\
			125&0.321\\
				\midrule
		\end{tabular}
	\end{center}
\end{table}

\begin{table}
	\caption[Operational data]{Operational data}
	\label{table:inputdataobj}
	\begin{center}
		\begin{tabular}{l l}
			Type&Value\\
			\midrule
			Electricity price $c_{\rm elec}$&0.095 \textdollar/kWh\\
			Fuel price $c_{\rm fuel}$&0.07 \textdollar/kWh\\
			Fuel to heat efficiency $\nu_{\rm fuel}$&0.7\\
			Pump efficiency $\nu_{\rm pump}$&0.8\\
			Maximum pump pressure differential&16 bar\\
			Maximum supply temperature&120$^\circ$C\\
			Ground temperature&7$^\circ$C\\
			\midrule
		\end{tabular}
	\end{center}
\end{table}

We evaluate the strategies for design consumer supply temperatures 90$^\circ$C, 100$^\circ$C, 110$^\circ$C and 120$^\circ$C, and for a target pressure loss range between 100 and 1000Pa/m. As in \cite{Pirouti2013}, we assume that the return temperature at the generator is $70^\circ$C. We neglect the thermal losses in the return pipe since they are very small. As this example is a tree network with a single pump, the loop containing the pump and the most remote load governs the pressure profile in the network, and the flow direction in each branch is fixed in advance. Figs.~\ref{fig:pressurefit} and \ref{fig:thermalfit} in the Appendix illustrate the quality of the polynomial pressure drop and thermal loss approximation along all pipe segments for a design temperature of $90^\circ$C and a target pressure loss of $100$Pa/m.

To apply the method we also need to assume a probability distribution for the demand. In steady-state, the energy demand of buildings in the network mostly depends on the ambient temperature \cite{Khosravi2017}. For the winter period, the load roughly varies between $50\%$ and $100\%$ of the maximum load \cite[Fig. 3]{Pirouti2013}. Since we do not have detailed statistical load data for this case, we assume that the heating demand is given by $Q_{t}=Q_{\textrm{max},t}r$, where $Q_{\textrm{max},,t}$ is the maximum load in branch $t$, and $r$ is a latent random variable following a uniform distribution on the interval $[0.5,1]$. 

We optimize the following strategies and compare them in terms of average hourly cost incurred by hydraulic and thermal losses:

\begin{itemize}
	\item VT-VF: The heat flow is regulated by adjusting the supply temperature of the generator and the mass flow rate at node $1$. We approximate the expected value $\mathbf{E}_{\varphi}(\rho(Q_{\rm heat}))$ and the distribution of optimal solutions when (\ref{eq:vtvfoptim}) is solved for each $Q_\textrm{heat}$ in $\mathbf{Y}_L$ using the \ac{SDP} relaxation (\ref{eq:sparsesdp2}), where the product measure constraint (\ref{eq:nonanticip2}) was replaced by 
    \begin{equation}\label{eq:marginalimposition}
    	m_{0 0\gamma}(I_s)=z_{\gamma}(I_s), \quad\forall \gamma \in \mathbb{N}^{p_s},\sum_{t=1}^{p_s} \gamma_t \leq 2k, s=1,\ldots,N_I.
    \end{equation}
   This imposes the moments of $\varphi$ on $\mu$ such that the optimal value of this SDP relaxation approximates $\mathbf{E}_{\varphi}(\rho(Q_{\rm heat}))$ from below \cite{Lasserre2010}.
	\item CT-VF: The heat flow is regulated by adjusting the mass flow rate while the supply temperature of the generator is held constant at node $1$. We approximate problem (\ref{eq:vtcfoptim_1})-(\ref{eq:vtcfoptim_2}) using the \ac{SDP} relaxations (\ref{eq:sparsesdp2}) to determine the optimal supply temperature set-point.
	\item VT-CF: The heat flow is regulated by adjusting the supply temperature while the mass flow in the generator branch is held constant. We approximate problem (\ref{eq:ctvfoptim_1})-(\ref{eq:ctvfoptim_2}) using the \ac{SDP} relaxations (\ref{eq:sparsesdp2}) to determine the optimal mass flow set-point.
\end{itemize}

The main results on expected cost of losses computed by the \ac{SDP} relaxations with $2k=4$ for all design cases and strategies are presented in three figures: the total expected cost $\mathbf{E}_\varphi (\sum_{s\in\mathcal{B}_P}\frac{c_{\rm fuel}}{\nu_{\rm fuel}}Q_s+\sum_{k\in\mathcal{B}_G}c_{\rm elec}P_k)$ is shown in Fig.~\ref{fig:relaxcomp}, and this cost is decomposed into hydraulic and thermal losses in Figs.~\ref{fig:pumpcost} and \ref{fig:thermalcost}.

\begin{figure}
	\centering
	\includegraphics[trim={0 0cm 0 0},clip,scale=1]{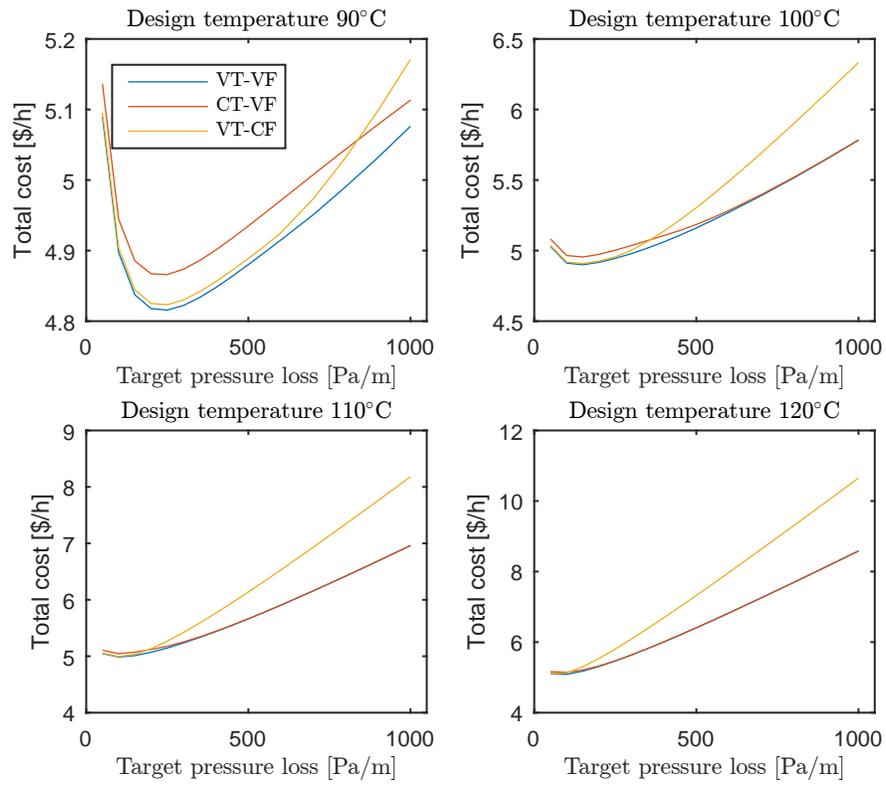}
	\caption[Expected cost of hydraulic and thermal losses]{Expected cost of hydraulic and thermal losses for different design cases computed using \ac{SDP} relaxations with $2k=4$}\label{fig:relaxcomp}
\end{figure}
\begin{figure}
	\centering
	\includegraphics[trim={0 0cm 0 0},clip,scale=1]{pumpcost}
	\caption[Expected cost of hydraulic losses]{Expected hydraulic cost for different design cases computed using \ac{SDP} relaxations with $2k=4$}\label{fig:pumpcost}
\end{figure}
\begin{figure}
	\centering
	\includegraphics[trim={0 0cm 0 0},clip,scale=1]{thermalcost}
	\caption[Expected cost of thermal losses]{Expected cost of thermal losses for different design cases computed using \ac{SDP} relaxations with $2k=4$}\label{fig:thermalcost}
\end{figure}

Compared to the VT-VF strategy, the VT-CF strategy only performs well for networks designed with low target pressure loss values (see Fig.~\ref{fig:relaxcomp}). Since the fixed mass flow rate of the VT-CF strategy must be sufficient to provide the maximum load, high hydraulic losses are incurred (see Fig.~\ref{fig:pumpcost}), particularly for networks with small pipe diameters that result from designing with high target pressure loss values and supply temperatures.

The VT-CF strategy performs slightly better than the CT-VF strategy in a limited number of cases with low target pressure loss values. To determine the source of this difference, we investigated the variance of the three operating strategies. This information is contained in the second order moments computed by the relaxations and is reported in Figs.~\ref{fig:massflowrates} and \ref{fig:supplytemperatures}. We note that the VT-CF strategy outperforms the CT-VF strategy in cases for which the optimal VT-VF strategy primarily varies the supply temperature, as indicated by the low mass flow rate and high temperature variance in Figs.~\ref{fig:massflowrates} and \ref{fig:supplytemperatures}. In all other cases, the CT-VF strategy performs better than the VT-CF and almost as well as the VT-VF. The reason for this becomes clear by studying Figs.~\ref{fig:pumpcost} and \ref{fig:thermalcost}. The hydraulic cost increase cannot be compensated by the lower thermal losses observed for higher target pressure loss values. This suggests that lowering the mass flow rate to reduce costly hydraulic losses is more important than lowering the supply temperature to reduce thermal losses. In the worst case observed, the VT-CF strategy is 26.5$\%$ more costly than CT-VF strategy. Note that for a full cost analysis of a strategy change, one would need to consider additional aspects such as the investment cost of variable-speed pumps. To do this, one would need to integrate our approach into the design procedure detailed in \cite{Pirouti2013}.

\begin{figure}
	\centering
	\includegraphics[trim={0 0cm 0 0},clip,scale=1]{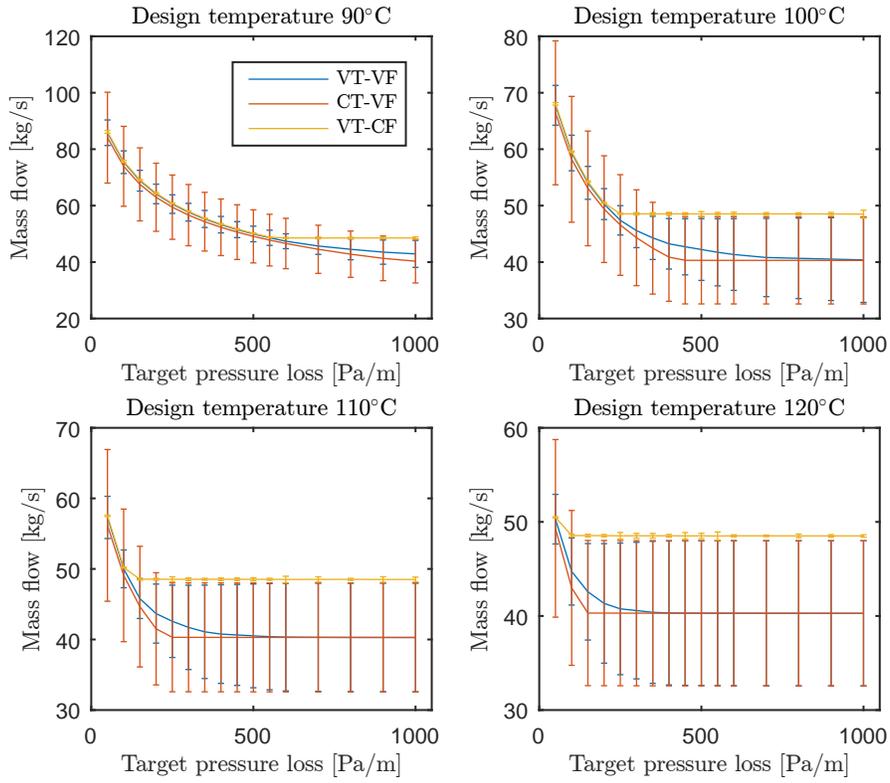}
	\caption[Estimated mean and standard deviation of the distribution of optimal pump mass flow rates]{Estimated mean and standard deviation of the distribution of optimal pump mass flow rates for different design cases computed using \ac{SDP} relaxations with $2k=4$. The second moment is shown in terms of a symmetric standard deviation for illustration purposes. Note that the VT-CF strategy has no flow variance.}\label{fig:massflowrates}
\end{figure}
\begin{figure}
	\centering
	\includegraphics[trim={0 0cm 0 0},clip,scale=1]{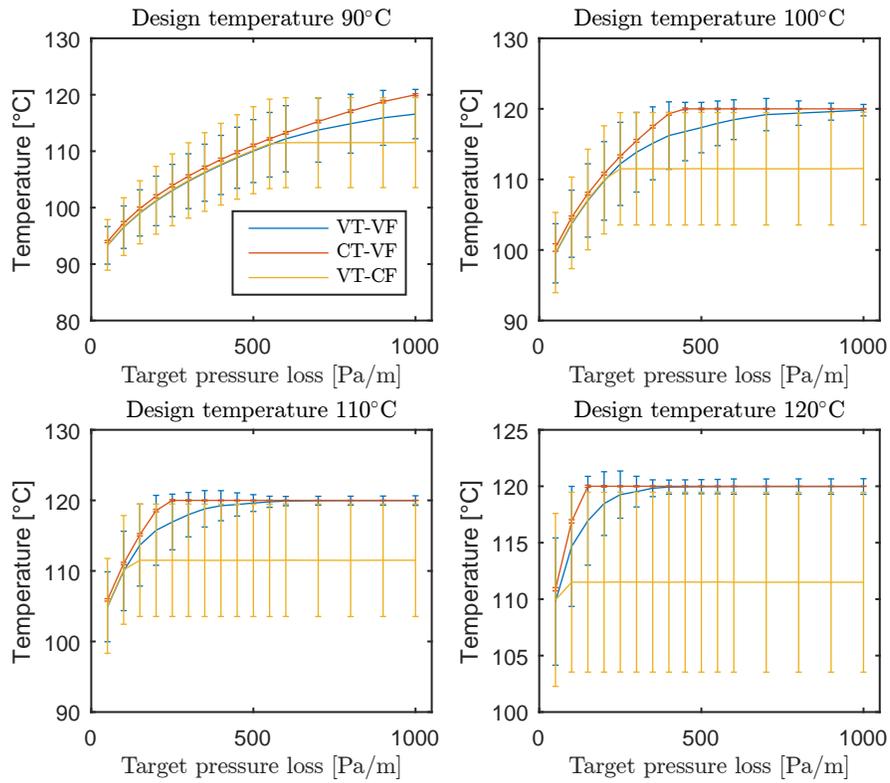}
	\caption[Estimated mean and standard deviation of the distribution of optimal generator supply temperatures]{Estimated mean and standard deviation of the distribution of optimal generator supply temperatures for different design cases computed using \ac{SDP} relaxations with $2k=4$. The second moment is shown in terms of a symmetric standard deviation for illustration purposes. Note that the CT-VF strategy has no temperature variance.}\label{fig:supplytemperatures}
\end{figure}

Increasing the order of the moment relaxation significantly increases the computation time of the resulting optimization problem. It takes between $2.6$ and $8.3$ seconds to solve a single sparse \ac{SDP} relaxations with $2k=4$, and between 136 and 421 seconds to solve a single sparse SDP relaxations with $2k=6$, on a PC with an Intel-i5 2.2GHz CPU with 8GB RAM. The expected total cost of the VT-VF and CT-VF strategies for the next relaxation level $2k=6$ is almost identical to the \ac{SDP} relaxation with $2k=4$, affirming the conclusion that the CT-VF strategy performs almost as well as the VT-VF strategy. In case of the VT-CF strategy, the \ac{SDP} relaxation with $2k=6$ gives a slightly higher expected cost estimate for high target pressure loss values in Fig.~\ref{fig:totalcost_3order}. This supports the previous statement that the VT-CF strategy is a suboptimal choice for networks designed for high design supply temperatures and target pressure loss values. One could check if this trends persists using \ac{SDP} relaxations with $2k=8$ on a PC with more than 8GB RAM to store the significantly larger moment and localizing matrices of size. The moment problems were implemented using YALMIP \cite{Lofberg2004} and solved with MOSEK\textsuperscript{TM}. We used SparsePOP \cite{Waki2008} to detect the sparsity pattern in the problem data.
\begin{figure}
	\centering
	\includegraphics[trim={0 0cm 0 0},clip,scale=1]{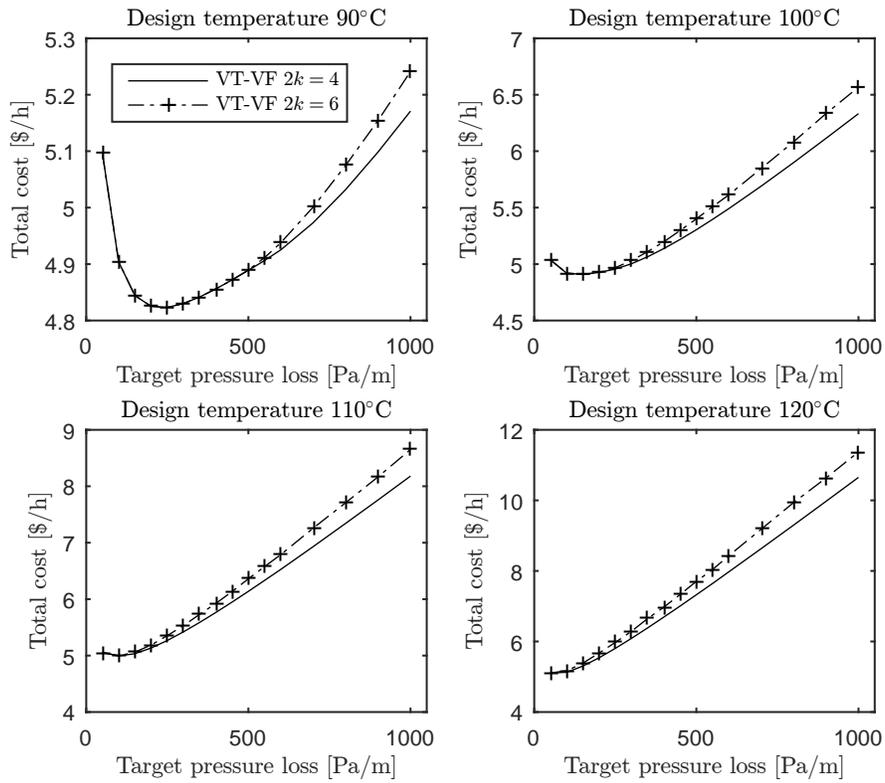}
	\caption[Expected cost of hydraulic and thermal losses using SDP relaxations with $2k=4$ and $2k=6$]{Expected cost of hydraulic and thermal losses of the VT-CF strategy for different design cases computed using \ac{SDP} relaxations with $2k=4$ and $2k=6$. The expected cost of the VT-VF and CT-VF, not shown in this figure, are effectively identical for $2k=4$ and $2k=6$.}\label{fig:totalcost_3order}
\end{figure}

\section{Conclusion}\label{twostagepoly_conclusion}
In this paper, we provided \ac{SDP} relaxations to approximate two-stage stochastic programs with polynomial objective and polynomial constraints. Using our approach, we minimized the expected operating cost of different district heating operational strategies for various design cases. We showed that when optimized in the systematic manner proposed, a strategy that varies the mass flow and holds the supply temperature constant can perform almost as well as one that optimizes both the mass flow and the supply temperature for each load condition.

While this study is focused on the steady-state case, the district heating model as well as our computational framework could be extended to dynamic processes found in long-distance district heating networks (see e.g.~\cite{vanderheijde2017}), in which the rate of change of demand cannot be taken as slow compared to the network transients. For instance, this could be done by adding additional stages to the stochastic program or by using occupation measures \cite{Savorgnan2009}. On a more technical level, the \ac{SDP} relaxations could extend the robust optimization approach of \cite{Lasserre2012} to include recourse, based on recent results in \cite{Jasour2015} and \cite{Lasserre2017}. It is also highly desirable to obtain a rate of convergence for the \ac{SDP} relaxations of Section \ref{twostagemoment}, in the spirit of \cite{korda2016}.

\section*{Acknowledgments}
We would like to thank Viktor Dorer, Roy Smith and Jan Carmeliet for their valuable help and support. We are also grateful to Felix B\"unning, Danhong Wang, Georgios Darivianakis, Benjamin Flamm, Mohammad Khosravi and Annika Eichler for fruitful discussions.
This research project is financially supported by the Swiss Innovation Agency Innosuisse and by NanoTera.ch under the project HeatReserves, and is part of the Swiss Competence Center for Energy Research SCCER FEEB\&D.

%\end{linenumbers}
%\section*{References}
\bibliography{ThesisReferences}
\appendix
 \section{Proof of Theorem \ref{thm:gmp_equiv}}\label{app:gmp_equiv}
 \begin{theorem*}
	Two-stage problem (\ref{eq:prefix1})-(\ref{eq:prefix2}) and the \ac{GMP} (\ref{eq:onestage}) are equivalent, in that
\begin{enumerate}
\item[(a)] $\rho_{12}=\rho$, and 
\item[(b)] if $x^*_1$ is an optimal solution of (\ref{eq:prefix1}), then (\ref{eq:onestage}) has an optimal solution that includes the Dirac measure $\mu_1^*=\delta_{x^*_1}$.
\end{enumerate}
 \end{theorem*}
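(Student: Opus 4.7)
The plan is to prove (a) via the two inequalities $\rho_{12} \leq \rho$ and $\rho_{12} \geq \rho$, with the first direction also producing the Dirac-supported optimal measure required for (b). Two ingredients will do most of the work: the measurable-selection result for $x_2^*(x_1,y)$ cited after Assumption 1 (giving a well-defined measurable second-stage map), and the disintegration theorem for measures on a compact Polish set (allowing one to factor $\mu_2$ along its $(x_1,y)$ marginal).

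For $\rho_{12} \leq \rho$ and for (b) simultaneously, I would start from an optimal $x_1^*$ of (\ref{eq:prefix1})-(\ref{eq:prefix2}); such a minimizer exists because $\mathbf{K}_1$ is compact and the expected recourse cost $x_1 \mapsto \mathbf{E}_\varphi[v^*(x_1,y)]$ is continuous by the cited Theorem 2.2 of Lasserre. Then set $\mu_1^* := \delta_{x_1^*}$ and take $\mu_2^*$ to be the push-forward of $\varphi$ under the measurable map $y \mapsto (x_1^*, x_2^*(x_1^*,y), y)$. A direct check shows $\mu_2^* \in \mathcal{M}(\mathbf{K}_2)_+$, that $\mu_1^*$ integrates to one, and that $\pi_{x_1,y}\mu_2^* = \mu_1^* \otimes \varphi$, so the pair is feasible for (\ref{eq:onestage}). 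Evaluating the GMP objective yields $f_1(x_1^*) + \int_{\mathbf{Y}} f_2(x_1^*, x_2^*(x_1^*,y))\, d\varphi = \rho$, which simultaneously proves the inequality and delivers a Dirac-supported optimizer.

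For the reverse direction I would take any feasible pair $(\mu_1,\mu_2)$ for (\ref{eq:onestage}). Because $\mathbf{K}_2$ is compact and the projection constraint fixes the $(x_1,y)$-marginal of $\mu_2$ as $\mu_1 \otimes \varphi$, disintegration produces a family of probability kernels $\{\mu_2^{x_1,y}\}$ with
\[
d\mu_2(x_1,x_2,y) = d\mu_2^{x_1,y}(x_2)\, d(\mu_1 \otimes \varphi)(x_1,y),
\]
and with $\mu_2^{x_1,y}$ supported on the fiber $\{x_2 : (x_1,x_2,y) \in \mathbf{K}_2\}$ for $(\mu_1 \otimes \varphi)$-almost every $(x_1,y)$. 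On such a fiber the integral $\int f_2(x_1,x_2)\, d\mu_2^{x_1,y}(x_2)$ is an expectation over feasible second-stage decisions, hence is bounded below by $v^*(x_1,y)$. Integrating this inequality against $\mu_1 \otimes \varphi$, adding $\int f_1\, d\mu_1$, and noting that $f_1(x_1) + \mathbf{E}_\varphi[v^*(x_1,y)] \geq \rho$ for every $x_1 \in \mathbf{K}_1$, I would obtain
\[
\int_{\mathbf{K}_1} f_1\, d\mu_1 + \int_{\mathbf{K}_2} f_2\, d\mu_2 \;\geq\; \rho \int_{\mathbf{K}_1} d\mu_1 \;=\; \rho,
\]
and taking the infimum over feasible $(\mu_1,\mu_2)$ gives $\rho_{12} \geq \rho$.

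The hard part will be the disintegration step: checking that the regular conditional measures $\mu_2^{x_1,y}$ can be chosen jointly measurable, are almost-surely supported on the correct fibers of $\mathbf{K}_2$, and that the map $(x_1,y) \mapsto \int f_2\, d\mu_2^{x_1,y}$ is measurable so that Fubini applies. All three facts are standard for measures on compact (hence Polish) spaces under Assumption 1, but this is where the clean-looking argument hides its analytic content; the rest of the proof is essentially bookkeeping.
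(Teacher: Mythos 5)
Your argument is correct in substance but follows a genuinely different route from the paper. The paper never splits the claim into two inequalities: it reformulates the first stage as a GMP over $\mu_1$, invokes Fubini to write $\int_{\mathbf{K}_1}\int_{\mathbf{Y}}v^*\,d\varphi\,d\mu_1$ as an integral against $\mu_1\otimes\varphi$, identifies that quantity with the optimal value of an infinite-dimensional LP over $\mathcal{C}(\mathbf{K}_1\times\mathbf{Y})$ (Corollary 2.5 of Lasserre's parametric-optimization framework), dualizes that LP into the fixed-marginal GMP \eqref{eq:gensecond} using a zero-duality-gap result, substitutes, and merges the two $\min$ operators; the Dirac structure of $\mu_1^*$ is then imported wholesale from \cite[Theorem 1.1]{Lasserre2014}. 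You instead prove $\rho_{12}\le\rho$ by an explicit push-forward construction (which yields part (b) constructively rather than by citation) and $\rho_{12}\ge\rho$ by disintegrating $\mu_2$ along its $(x_1,y)$-marginal and bounding the conditional expectation of $f_2$ below by $v^*$ fiberwise. Your version is more self-contained and arguably more rigorous about why the optimal values coincide, since the paper's ``replace the inner term by a GMP and merge'' step silently relies on the no-duality-gap results to justify the substitution; the cost is that you must carry the analytic weight of disintegration and joint measurability yourself, precisely where you acknowledge the content lies. The paper's duality route, by contrast, is what produces the dual LP \eqref{eq:onestagedual} over bounded functions exploited later in \ref{dualproblem}, so the two proofs are not interchangeable for the rest of the development. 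One caveat: your claim that $x_1\mapsto\mathbf{E}_\varphi[v^*(x_1,y)]$ is continuous is stronger than what Assumption \ref{as:poly2} delivers (the feasible-set correspondence is only guaranteed upper semicontinuous, so $v^*$ and its expectation are in general only lower semicontinuous); this still suffices for existence of a minimizer on the compact set $\mathbf{K}_1$, and part (b) is in any case conditional on an optimal $x_1^*$ being given, so the argument survives with that weakening.
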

	\begin{proof}
		The proof consists of two steps. We start by stating a \ac{GMP} that is equivalent to the first stage (\ref{eq:prefix1}). We then replace the value function term in the first stage \ac{GMP} by a second stage \ac{GMP} using duality arguments. Consider the following reformulation of (\ref{eq:prefix1}) as a \ac{GMP}: 
		\begin{equation}\label{eq:genfirst}
			\begin{aligned}
			\rho_1=\min_{\mu_1 \in \mathcal{M}(\mathbf{K}_1)_+} &\int_{\mathbf{K}_1} 		f_1d\mu_1+\int_{\mathbf{K}_1}\int_{\mathbf{Y}} v^* d\varphi d\mu_1\\
			\textrm{s.t.}\quad &\int_{\mathbf{K}_1} d\mu_1 =1,\\
			\end{aligned}
		\end{equation}
		where $v^*(x_1,y)$ is the value function of the second stage problem (\ref{eq:prefix2}) and $\int_{\mathbf{Y}} v^* d\varphi=\mathbf{E}_\varphi(v(x_1,y))$ its expected value with respect to $\varphi$. 
        
  %      We replaced $\int_{\mathbf{K}_1\times\mathbf{Y}} v^* d(\mu_1\otimes\varphi)$ by $\int_{\mathbf{K}_1}\int_{\mathbf{Y}} v^* d\varphi d\mu_1$ noting that the probability measures $\mu_1$ and $\varphi$ are independent (product measures) and applying Fubini's theorem.
		
	   By Fubini's Theorem, we can write $\int_{\mathbf{K}_1}\int_{\mathbf{Y}} v^* d\varphi d\mu_1$ as $\int_{\mathbf{K}_1\times\mathbf{Y}} v^* d(\mu_1\otimes\varphi)$. Under Assumption \ref{as:poly2}, the expected value term $\int_{\mathbf{K}_1\times\mathbf{Y}} v^* d(\mu_1\otimes\varphi)$ is the optimal value of an infinite-dimensional \ac{LP} problem over the space of bounded functions $\mathcal{C}(\mathbf{K}_1\times\mathbf{Y})$ \cite[Corollary 2.5]{Lasserre2010}:
			\begin{equation}\label{eq:dualsecond}
			\begin{aligned}		
			d_2=\max_{v \in \mathcal{C}(\mathbf{K}_1\times\mathbf{Y})} &\int_{\mathbf{K}_1\times\mathbf{Y}} v d(\mu_1\otimes\varphi)\\
			\textrm{s.t.}\quad &f_2(x_1,x_2)-v(x_1,y) \geq 0,\quad \forall(x_1,x_2,y)\in\mathbf{K}_2
			\end{aligned}
		\end{equation}
		Replacing $\int_{\mathbf{K}_1\times\mathbf{Y}} v^* d(\mu_1\otimes\varphi)$ in (\ref{eq:genfirst}) by (\ref{eq:dualsecond}) leads to a nonlinear $\min-\max$ problem.
%		Inserting problem (\ref{eq:dualsecond}) into (\ref{eq:genfirst}), we obtain the following nonlinear $\min-\max$ problem:
%		\begin{equation}\label{eq:twolevel}
%			\begin{aligned}
%			\rho_1=\min_{\mu_1 \in \mathcal{M}(\mathbf{K}_1)_+} &\int_{\mathbf{K}_1}f_1 d\mu_1+	
%			\begin{Bmatrix}
%			\max_{v \in C(\mathbf{K}_1\times\mathbf{Y})} \int_{\mathbf{K}_1}\int_{\mathbf{Y}} v(x_1,y)d\varphi d\mu_1\\
%			\enskip\textrm{s.t.} \enskip f_2(x_1,x_2)-v(x_1,y) \geq 0 \quad \textrm{on} \quad \mathbf{K}_ 2
%			\end{Bmatrix}\\
%			& \textrm{s.t.}\quad\int_{\mathbf{K}_1} d\mu_1 =1,
%			\end{aligned}
%		\end{equation}
		Instead, we dualize (\ref{eq:dualsecond}) to obtain a \ac{GMP} with fixed marginal measures $\varphi$ and $\mu_1$ \cite{Lasserre2010}:
		\begin{equation}\label{eq:gensecond}
		\begin{aligned}
		\rho_2=&\min_{\mu_2 \in \mathcal{M}(\mathbf{K}_2)_+}\int_{\mathbf{K}_2} f_2d\mu_2\\
		\textrm{s.t.}\quad &\pi_{x_1,y}\mu_2= \mu_1\otimes\varphi
		\end{aligned}
		\end{equation}
		The \ac{GMP} (\ref{eq:gensecond}) encodes all the instances of the optimal second stage solutions $x_2^*(x_1,y)$ in the measure $\mu_2^*$ supported on $\mathbf{K}_2$, where $\mu^*_2$ is the optimal solution of (\ref{eq:gensecond}) given the product measure $\mu_1\otimes\varphi$. By Assumption \ref{as:poly2} and by virtue of \cite[Lemma 2.4]{Lasserre2010}, there is no duality gap between (\ref{eq:dualsecond}) and (\ref{eq:gensecond}). This means that $\int_{\mathbf{K}_1\times\mathbf{Y}} v^* d(\mu_1\otimes\varphi)=\int_{\mathbf{K}_2} f_2d\mu^*_2$.
		Thus, by replacing $\int_{\mathbf{K}_1\times\mathbf{Y}} v^* d(\mu_1\otimes\varphi)$ in (\ref{eq:genfirst}) by (\ref{eq:gensecond}), we obtain:
		\begin{equation}\label{eq:onestageminmin}
			\begin{aligned}
			\rho_{1}=\min_{\mu_1 \in \mathcal{M}(\mathbf{K}_1)_+} &\int_{\mathbf{K}_1} f_1d\mu_1+
			\begin{Bmatrix}
			\min_{\mu_2 \in \mathcal{M}(\mathbf{K}_2)_+}\int_{\mathbf{K}_2} f_2d\mu_2\\
			\enskip\textrm{s.t.}\enskip\pi_{x_1,y}\mu_2=\mu_1\otimes\varphi
			\end{Bmatrix}\\
			&\textrm{s.t.} \quad\int_{\mathbf{K}_1} d\mu_1 =1,\\
			\end{aligned}
		\end{equation}
		By merging the $\min$-operators of (\ref{eq:onestageminmin}), we obtain (\ref{eq:onestage}), with $\rho_1=\rho_{12}$. By virtue of \cite[Theorem 1.1]{Lasserre2014}, we have $\rho_1=\rho$ and the optimal solution $\mu_1^*$ includes a Dirac measure $\delta_{x^*_1}$, where $x^*_1$ is an optimal solution of (\ref{eq:prefix1}).
	\end{proof}
\section{Dual problem}\label{dualproblem}
	In this section, we present an infinite-dimensional linear program over bounded functions that describes the optimal value of the two-stage stochastic program and the second stage value function $v^*(x_1,y)$. This program can be approximated using \ac{SOS} techniques. Consider the following infinite-dimensional \ac{LP} dual to (\ref{eq:genfirst}):
	\begin{equation}
		\begin{aligned}\label{eq:dualfirst}
		d_1=\max_{e \in \mathbb{R}} e\\
		\textrm{s.t.} \quad& f_1(x_1)+\underbrace{\int_\mathbf{Y} v^*(x_1,y)d\varphi}_{\mathbf{E}_\varphi(v^*(x_1,y))}-e \geq 0,\quad\forall x_1\in\mathbf{K}_1.
		\end{aligned}
	\end{equation}
	By \cite[Corollary 1.4]{Lasserre2014}, there is no duality gap between (\ref{eq:genfirst}) and (\ref{eq:dualfirst}), hence $d_1=\rho_1$. Moreover, for each $x_1\in\mathbf{K}_1$, we have:
	\begin{equation}\label{eq:dualsecond2}
		\begin{aligned}		
			\int_\mathbf{Y} v^*(x_1,y)d\varphi=\max_{v \in C({\mathbf{K}_1} \times \mathbf{Y})} \int_{\mathbf{K}_1}\int_\mathbf{Y} v(x_1,y)d\varphi d\delta_{x_1}\\
			\textrm{s.t.}\quad f_2(x_1,x_2)-v(x_1,y) \geq 0, \quad \forall(x_1,x_2,y)\in\mathbf{K}_2.
		\end{aligned}
	\end{equation}
	By inserting problem (\ref{eq:dualsecond2}) into (\ref{eq:dualfirst}) and merging the $\max$-operators, we obtain a single stage problem:
	\begin{equation}\label{eq:onestagedual}
		\begin{aligned}
		d_{12}=\max_{e \in \mathbb{R}, v \in C(\mathbf{K}_1\times \mathbf{Y})} &e\\
		\textrm{s.t.}\quad &f_1(x_1)+\int_\mathbf{Y} v(x_1,y)d\varphi-e \geq 0, \quad\forall x_1\in\mathbf{K}_1,\\
		&f_2(x_2)-v(x_1,y) \geq 0, \quad\forall(x_1,x_2,y)\in\mathbf{K}_2.
		\end{aligned}
	\end{equation}
	\begin{theorem}\label{th:isdualof}
		Under Assumption \ref{as:poly2}, the \ac{LP} (\ref{eq:onestagedual}) over bounded functions is the dual of the \ac{GMP} (\ref{eq:onestage}) and the duality gap between (\ref{eq:onestage}) and (\ref{eq:onestagedual}) is zero.
	\end{theorem}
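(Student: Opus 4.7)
The plan is to identify (\ref{eq:onestagedual}) as the Lagrangian dual of (\ref{eq:onestage}) and then invoke a general infinite-dimensional LP duality result for generalized moment problems to close the duality gap. First I would form the Lagrangian of (\ref{eq:onestage}), treating $e \in \mathbb{R}$ as the multiplier for the normalization constraint $\int d\mu_1 = 1$ and a continuous function $v \in C(\mathbf{K}_1\times\mathbf{Y})$ as the multiplier for the marginal-matching constraint $\pi_{x_1,y}\mu_2 = \mu_1\otimes\varphi$. The latter is equivalent to the family of scalar equalities
\begin{equation*}
\int_{\mathbf{K}_2} v(x_1,y)\,d\mu_2 \;=\; \int_{\mathbf{K}_1}\!\!\int_{\mathbf{Y}} v(x_1,y)\,d\varphi\,d\mu_1,\quad \forall v \in C(\mathbf{K}_1\times\mathbf{Y}),
\end{equation*}
by compactness of the supports together with Fubini's theorem, so $v$ is the natural dual variable.

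Next, I would minimize the Lagrangian pointwise over $\mu_1 \in \mathcal{M}(\mathbf{K}_1)_+$ and $\mu_2 \in \mathcal{M}(\mathbf{K}_2)_+$. Collecting terms, the $\mu_2$-dependent part is $\int_{\mathbf{K}_2}(f_2 - v(x_1,y))\,d\mu_2$, which is bounded below (equal to $0$) if and only if $f_2(x_1,x_2) - v(x_1,y)\geq 0$ on $\mathbf{K}_2$, and $-\infty$ otherwise; the $\mu_1$-dependent part simplifies, via Fubini applied to $\int v \,d(\mu_1\otimes\varphi)$, to $\int_{\mathbf{K}_1}\bigl(f_1(x_1) - e + \int_{\mathbf{Y}} v\,d\varphi\bigr) d\mu_1$, which is bounded below (equal to $0$) if and only if $f_1(x_1) + \int_{\mathbf{Y}} v\,d\varphi - e \geq 0$ on $\mathbf{K}_1$. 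The constant term $e$ survives. Reading off the dual gives precisely (\ref{eq:onestagedual}), establishing weak duality $d_{12}\leq\rho_{12}$.

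To obtain strong duality, I would appeal to the general LP-duality machinery for generalized moment problems, specifically the compact-support version used in the proof of Theorem \ref{thm:gmp_equiv} (for instance \cite[Theorem~1.3 and Corollary~1.4]{Lasserre2014}, applied in a nested fashion as in (\ref{eq:dualsecond})--(\ref{eq:gensecond})). Under Assumption \ref{as:poly2}, the sets $\mathbf{K}_1$, $\mathbf{K}_2$, $\mathbf{Y}$ are compact, so the sets of admissible non-negative Borel measures are weak-$*$ closed, and the objective in (\ref{eq:onestage}) is weak-$*$ continuous on bounded sets. Combined with existence of a strictly feasible primal pair $(\mu_1,\mu_2)$ (take any probability $\mu_1$ on $\mathbf{K}_1$ and construct $\mu_2$ as $\mu_1\otimes\varphi$ pushed forward through the measurable selector $x_2^*(x_1,y)$ guaranteed by Assumption \ref{as:poly2}), the standard Riesz-representation plus Hahn--Banach separation argument underlying Lasserre's theorem yields $d_{12}=\rho_{12}$.

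The main obstacle is the last step: verifying strong duality rather than just weak duality. In infinite dimensions, zero duality gap does not follow automatically from LP structure and requires a topological argument. The cleanest route is to recognize that (\ref{eq:onestage}) is, up to the nested structure produced in the proof of Theorem \ref{thm:gmp_equiv}, the composition of two GMPs each of the form treated in Lasserre's framework, so that the duality pairings $(e,\int d\mu_1 = 1)$ and $(v,\pi_{x_1,y}\mu_2 - \mu_1\otimes\varphi = 0)$ inherit the no-gap conclusion from (\ref{eq:genfirst})--(\ref{eq:dualfirst}) and (\ref{eq:dualsecond})--(\ref{eq:gensecond}) respectively. Reassembling these two no-gap statements, together with the equality $\rho_{12}=\rho_1$ already established, yields $d_{12}=\rho_{12}$, completing the proof.
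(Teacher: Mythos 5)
Your derivation of the dual form is correct and is essentially the paper's, just phrased differently: the paper writes (\ref{eq:onestage}) in the canonical conic-LP form of \cite{barvinok2002}, builds the adjoint $\mathcal{A}'(e,v)$ explicitly, and reads off (\ref{eq:onestagedual}); your Lagrangian computation with multipliers $e$ and $v\in\mathcal{C}(\mathbf{K}_1\times\mathbf{Y})$ produces the same constraints $f_2-v\geq 0$ on $\mathbf{K}_2$ and $f_1+\int_\mathbf{Y} v\,d\varphi-e\geq 0$ on $\mathbf{K}_1$, and hence weak duality. That part is fine.

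The gap is in your strong-duality step, on both of the routes you sketch. The ``reassembling'' route does not work as stated: the inner no-gap statement (\ref{eq:dualsecond2}) is pointwise in $x_1$ --- for each fixed $x_1$ it produces a near-optimal $v$ that is only guaranteed to be good \emph{at that} $x_1$. To conclude $d_{12}=d_1$ you need a \emph{single} continuous $v$ with $f_2-v\geq 0$ on $\mathbf{K}_2$ and $f_1(x_1)+\int_\mathbf{Y} v(x_1,y)\,d\varphi\geq d_1-\epsilon$ \emph{uniformly over all} $x_1\in\mathbf{K}_1$, i.e.~a uniform under-approximation of $\int_\mathbf{Y} v^*(x_1,\cdot)\,d\varphi$; the value function $v^*$ need not be continuous, so stitching the pointwise certificates together is exactly the nontrivial step, and composing two no-gap statements does not supply it. Your alternative route names the right ingredients (compact supports, weak-$*$ closedness, continuity of the objective) but never executes the argument; note also that a Slater-type ``strictly feasible pair'' is not the operative condition here, since the cone of non-negative measures has empty interior in the relevant topology. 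What actually closes the gap in the paper is a closed-image-cone condition: one must show that $D=\{(\mathcal{A}\mu,\langle f,\mu\rangle):\mu\in\mathcal{M}(\mathbf{K}_1)_+\times\mathcal{M}(\mathbf{K}_2)_+\}$ is closed in $(\mathbb{R}\times\mathcal{M}(\mathbf{K}_1\times\mathbf{Y}))\times\mathbb{R}$. The paper does this concretely: for any sequence $\mu_n$ whose image converges, pairing $\mathcal{A}\mu_n$ with the test function $(1,1)$ bounds the total masses $\mu_{1,n}(\mathbf{K}_1)$ and $\mu_{2,n}(\mathbf{K}_2)$, Alaoglu's theorem then yields a weak-$*$ convergent subsequence, and weak-$*$ continuity of $\mathcal{A}$ and of $\mu\mapsto\langle f,\mu\rangle$ identifies the limit as a point of $D$. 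Zero duality gap then follows from \cite{barvinok2002}. Without this (or an equivalent) closedness verification, your proof establishes only $d_{12}\leq\rho_{12}$.
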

	\begin{proof}
	
	To show that (\ref{eq:onestagedual}) is the dual of (\ref{eq:onestage}), we state (\ref{eq:onestage}) as a linear program in canonical form \cite{barvinok2002}:
	\begin{equation}\label{eq:canprimal}
	\begin{aligned}
	p=\min_x \langle x,c \rangle_1\\
	\textrm{s.t.}\quad \mathcal{A}(x)=b\\
	x \in C
	\end{aligned}
	\end{equation}	
	where $\langle\rangle_1: E_1\times F_1\rightarrow\mathbb{R}$ is a duality of vector spaces, $\mathcal{A}$ a linear map $\mathcal{A}: E_1 \rightarrow E_2$, $E_2$ being another vector space, and $C$ is a convex cone $C\subset E_1$. We have $x\in E_1$, $b\in E_2$ and $c\in F_1$.
	
	The canonical dual form is given as:
	\begin{equation}\label{eq:candual}
	\begin{aligned}
	d=\max_y &\langle b,l \rangle_2\\
	\textrm{s.t.}\quad &c-\mathcal{A}'l \in C',\\
	&y\in F_2\\
	\end{aligned}
	\end{equation}
	where $\langle\rangle_2: E_2\times F_2\rightarrow\mathbb{R}$ is a duality of vector spaces, and $\mathcal{A'}$ the adjoint linear map $\mathcal{A'}: F_2 \rightarrow F_1$ such that
	\begin{equation}\label{eq:adjointdef}
	\langle \mathcal{A}(x),l \rangle = \langle x,\mathcal{A}'(l) \rangle \quad\forall x \in E_1,\forall l \in F_1.
	\end{equation}
	The convex cone $C'$ is the dual cone $C'\subset F_1$ defined by 
	\begin{equation}
	C'=\{l\in F_1: \langle x,l\rangle\geq 0 \quad\forall x\in C \}.
	\end{equation}
	
	In Table \ref{table:canonical}, we establish the correspondence between the canonical form and the two-stage problem.
	\begin{table}
		\caption[Definitions of the elements of the canonical primal and dual linear programs]{Definitions of the elements of the canonical primal (\ref{eq:canprimal}) and dual (\ref{eq:candual}) \acp{LP}}
		\label{table:canonical}
		\begin{center}
			\begin{tabular}{ l l l}
				Type& Canonical & Two-stage problem\\
				\midrule
				Vector spaces  & $E_1$ &$\mathcal{M}(\mathbf{K}_1)\times\mathcal{M}(\mathbf{K}_2)$\\
				& $E_2$ &$\mathbb{R}\times\mathcal{M}(\mathbf{K}_1\times\mathbf{Y})$\\
				& $F_1$ &$\mathcal{C}(\mathbf{K}_1)\times\mathcal{C}(\mathbf{K}_2)$\\
				& $F_2$ &$\mathbb{R}\times\mathcal{C}(\mathbf{K}_1\times\mathbf{Y})$\\
				\midrule
				Variables & $x\in E_1$ & $(\mu_1,\mu_2)\in\mathcal{M}(\mathbf{K}_1)\times\mathcal{M}(\mathbf{K}_2)$\\
				& $l\in F_2$ & $(e,v)\in\mathbb{R}\times\mathcal{C}(\mathbf{K}_1\times\mathbf{Y})$\\
				\midrule
				Problem data & $b\in E_2$ & $(1,0)\in\mathbb{R}\times\mathcal{M}(\mathbf{K}_1\times\mathbf{Y})$\\
				& $c\in F_1$ & $(f_1,f_2)\in\mathcal{C}(\mathbf{K}_1)\times\mathcal{C}(\mathbf{K}_2)$\\
				\midrule
				Duality pairings & $\langle\rangle_1: E_1\times F_1\rightarrow\mathbb{R}$ & $\langle (\mu_1,\mu_2),(f_1,f_2) \rangle_1 =$\\
				& &$\quad \int_{\mathbf{K}_1} f_1d\mu_1+\int_{\mathbf{K}_2} f_2d\mu_2$\\
				& $\langle\rangle_2: E_2\times F_2\rightarrow\mathbb{R}$ & $\langle (1,0),(e,v)\rangle_2=e$\\
				\midrule				 
				Convex cone and dual & $C\subset E_1$ & $\mathcal{M}(\mathbf{K}_1)_+\times\mathcal{M}(\mathbf{K}_2)_+$\\
				& $C'\subset F_1$ & $\mathcal{C}(\mathbf{K}_1)_+\times\mathcal{C}(\mathbf{K}_2)_+$\\		 
				\bottomrule
			\end{tabular}
		\end{center}
	\end{table}
	%The dual pair of vector spaces $(E_1,F_1)$ is said to be equipped with the weak topology $\sigma(E_1,F_1)$. Since $E_1$ is the strong dual of $F_1$, $\sigma(E_1,F_1)$ is called the weak* topology.
	
	% On the dual pairs of vector spaces, we define a weakly continuous linear map $\mathcal{A}: E_1 \rightarrow E_2$ and its adjoint $\mathcal{A}^*: F_2 \rightarrow F_1$ defined as $\langle \mathcal{A}x,y \rangle = \langle x,\mathcal{A}^*y \rangle$ for $\mu \in E_1,h \in F_1$. $C \in E_1$ is a positive convex cone with its dual cone $C^* \in F_1$. We also define some vectors $b \in E_2$ and $c \in F_1$. The canonical primal linear program is stated as:In our case, we have two dual pairs of vector spaces ($\mathcal{M}(\mathbf{K}_1),\mathcal{C}(\mathbf{K}_1)$) and ($ \mathcal{M}(\mathbf{K}_2),\mathcal{C}(\mathbf{K}_2)$), positive convex cones \borelmeasures$_+$,$\mathcal{M}(\mathbf{K}_1)_+$ and dual cones $\mathcal{C}(\mathbf{K})_+,\mathcal{C}(\mathbf{K}_1)_+$, some vectors $(1,0) \in \mathbb{R} \times \mathcal{M}(\mathbf{K_i \times Y})$ and $f_1 \in \mathcal{C}(\mathbf{K}_1),f_2 \in \mathcal{C}(\mathbf{K})$ and 
	
	The linear operator is constructed from (\ref{eq:onestage}):
	\begin{equation}
	\mathcal{A}(\mu_1,\mu_2)=\begin{bmatrix}
	\int_{\mathbf{K}_1}d\mu_1\\
	\pi_{x_1,y}\mu_2- \mu_1\otimes\varphi 
	\end{bmatrix}\\
	\end{equation}
	Let $\pi_{x_1,y}':\mathcal{C}(\mathbf{K}_2) \rightarrow \mathcal{C}(\mathbf{K})$ be the adjoint operator of $\pi_{x_1,y}$ defined by $\pi'_{x_1,y}: (x_1,y)\mapsto (\pi'v)(x_1,x_2,y)=v(x_1,y)$. Using the definition (\ref{eq:adjointdef}) of the adjoint operator, we obtain:	
	\begin{equation}
	\mathcal{A}'(e,v)=\begin{bmatrix}
	e-\int_\mathbf{Y} \pi_{x_1,y}'v \medspace d\varphi\\
	\pi_{x_1,y}'v
	\end{bmatrix}
	\end{equation}
	Inserting the definitions of Table \ref{table:canonical} and the adjoint operator $\mathcal{A}'(e,v)$ in (\ref{eq:candual}), we obtain the dual formulation (\ref{eq:onestagedual}).
    
%Furthermore, we can state the following about (\ref{eq:onestage}) and (\ref{eq:onestagedual}):
	%\begin{theorem}\label{th:zerodualitygap}
	%		The duality gap between (\ref{eq:onestage}) and (\ref{eq:onestagedual}) is zero.
	%\end{theorem}
 %\begin{proof}
	If the cone
	\begin{equation*}
	D=\{(\mathcal{A}\mu,\langle f,\mu\rangle): \mu \in \mathcal{M}(\mathbf{K}_1)_+\times\mathcal{M}(\mathbf{K}_2)_+\}
	\end{equation*}
	is closed in $(\mathbb{R} \times \mathcal{M}(\mathbf{K}_1 \times \mathbf{Y})) \times \mathbb{R}$ for $\mu=(\mu_1,\mu_2)$ and $f=(f_1,f_2)$, there is no duality gap, thus $\rho_{12}=d_{12}$ \cite{barvinok2002}. Following the arguments of \cite[C.4]{Lasserre2014}, we start with a sequence $\mu_n$ such that 
	\begin{equation}\label{eq:conv_sa}
	\lim_{n \to \infty}(\mathcal{A}\mu_n,\langle f,\mu_n\rangle)=(a,s) 
	\end{equation}
	for some $(a,s)\in(\mathbb{R} \times \mathcal{M}(\mathbf{K}_1 \times \mathbf{Y})) \times \mathbb{R}$. Taking a test function $(1,1)$, we observe that the sequence is bounded because
	\begin{equation}
	\begin{aligned}
	&\langle\mathcal{A}\mu_n,(1,1)\rangle=\\
	&\quad\quad\mu_{1,n}(\mathbf{K}_1) +\underbrace{\pi_{x_{1},y}\mu_{2,n}(  \mathbf{K}_1\times\mathbf{Y})}_{=\mu_{2,n}(\mathbf{K}_2)}+\mu_{1,n}(\mathbf{K}_1)\varphi(\mathbf{Y}) \rightarrow a <\infty
	\end{aligned}
	\end{equation}
	and the measures are non-negative. By the Alaoglu Theorem \cite[Section 5.10, Theorem 1]{luenberger1970}, there is a subsequence $\mu_{n_k}$ that converges weakly to $\mu$. Using this fact, relationship (\ref{eq:conv_sa}) and the continuity of $\mathcal{A}$, we have $(a,s)=(\mathcal{A}\mu,\langle f,\mu\rangle)$. This means that $D$ is closed. 
	%so that $\lim_{n_k \to \infty}(\mathcal{A}\mu_{n_k},\langle f,\mu_{n_k}\rangle) \in \mathbb{R} \times \mathcal{M}(\mathbf{K}_1 \times \mathbf{Y}) \times \mathbb{R}$ by continuity of .
	% First, $\langle f,\mu_n\rangle \rightarrow s <\infty$ as $\mathbf{K}_1$ and $\mathbf{K}$ are compact and $f \in  \mathcal{C}(\mathbf{K}_1)\times\mathcal{C}(\mathbf{K})$
\end{proof}    
%\section{Appendix}
%\section{Proofs of Chapter \ref{ch:twostage}}
% \section{Proof of Theorem \ref{th:isdualof}}\label{app:isdualof}
% \begin{theorem*}
% 	The \ac{LP} (\ref{eq:onestagedual}) over bounded functions is the dual of the \ac{GMP} (\ref{eq:onestage}).
% \end{theorem*}

% \section{Proof of Theorem \ref{th:zerodualitygap}}\label{app:zerodualitygap}
% \begin{theorem*}
% 	The duality gap between (\ref{eq:onestage}) and (\ref{eq:onestagedual}) is zero.
% \end{theorem*}
\section{Hydraulic loss approximation}\label{app:pressureloss}
The pressure drop $\Delta p_s$ in [Pa] along a pipe segment $s\in\mathcal{B}_P$ is modelled as \cite{Liu2013a}
\begin{equation}\label{eq:pressuredrop}
\Delta p_s=K_{s}|m_{s}|m_{s}, \quad\forall s\in\mathcal{B}_{P},
\end{equation}
where $K_{s}$ is the resistance coefficient
\begin{equation}\label{eq:frictioncoeff}
K_{s}=\frac{8L_{s}}{D_{s}^5\rho\pi^2}f_{s},
\end{equation}
where $D_{s}$ is the pipe diameter in [m], $L_{s}$ the pipe length in [m], $\rho$ the water density in [kg/m$^3$] and $f_{s}$ the friction factor. %, $g$ the gravitational acceleration in [m/s$^2$]
Let $Re_s$ be the Reynolds number given by
\begin{equation}
Re_{s}=\frac{m_{s}}{\mu\rho\pi D_{s}/4},
\end{equation}
where $\mu$ is the kinematic viscosity of water in [m$^2$/s]. In a turbulent regime, we have $Re_{s}>4000$ \cite{Liu2013a}. Assuming a turbulent flow regime, the friction factor $f_{s}$ is governed by the equation of Colebrook-White \cite{Wang2018,Liu2013a}
\begin{equation}\label{eq:friction}
\frac{1}{\sqrt{f_{s}}}=-2\log_{10}\Bigg(\frac{\epsilon_{s}/D_{s}}{3.7}+\frac{2.51}{Re_{s}\sqrt{f_{s}}}\Bigg),
\end{equation}
where $\epsilon_{s}$ is the roughness of the pipe in [m]. Equation (\ref{eq:friction}) cannot be incorporated in a polynomial optimization framework. However, the friction factor $f_{s}$ changes only moderately and is often assumed to be constant (e.g.~\cite{AWAD2009,Tang2014,vanderheijde2017}). Thus, we solve equations (\ref{eq:pressuredrop})-(\ref{eq:friction}) for a range of mass flow rates $[0,m_{\textrm{max},s}]$ and fit a second order polynomial function to $\Delta p_s$.
\begin{figure}
	\centering
	\includegraphics[trim={0 0cm 0 0},clip,scale=1]{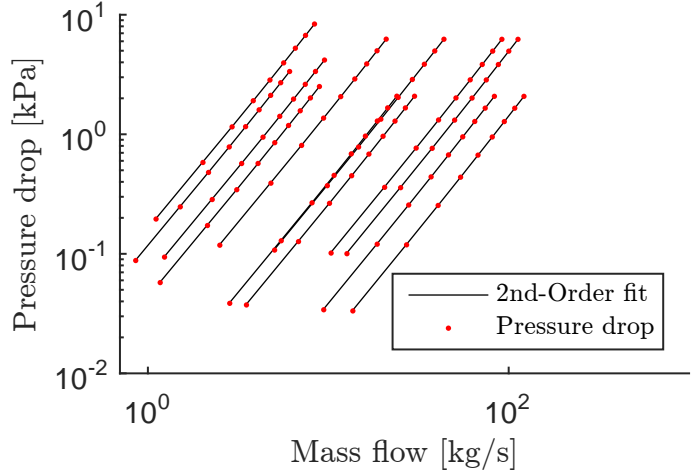}
	\caption[2nd-Order polynomial fit of the hydraulic pressure drop]{2nd-Order polynomial fit (black line), as defined in (\ref{eq:pressuredrop_fitted}), of the hydraulic pressure drop (red dots), as computed by equations (\ref{eq:pressuredrop})-(\ref{eq:friction}), along all pipe elements designed with $\hat{T}_{\textrm{out},s}=90^\circ$C and a target pressure of $100 $Pa/m. Both axes are on a logarithmic scale.}\label{fig:pressurefit}
\end{figure}%
\section{Thermal loss approximation}\label{app:thermalloss}
The outlet temperature $T_{\textrm{out},s}$ of a pipe branch is given by \cite{Liu2014}:
\begin{equation}\label{eq:thermalloss}
T_{\textrm{out},s}=(T_{\textrm{in},s}-T_{\textrm{a}})e^{-\frac{L_s\lambda_s}{c_p m_{s}}}+T_{\textrm{amb}},
\end{equation}
where $L_s$ is the pipe length in [m], $\lambda_s$ the per unit length heat transfer coefficient of the pipe material in [W/($^\circ$C m)], $c_p$ the specific heat capacity of water in [J/(kg $^\circ$C)] and $T_{\textrm{amb}}$ the ground temperature in [$^\circ$C]. The exponential expression in (\ref{eq:thermalloss}) cannot be incorporated in a polynomial framework. As suggested in \cite{AWAD2009} and \cite{Tang2014}, we approximate the exponential $e^{-\frac{L_s\lambda_s}{c_p} y}$ in (\ref{eq:thermalloss}), where and $y=\frac{1}{m_{s}}$, by the first two terms of its Taylor series $e^{-\frac{L_s\lambda_s}{c_p} a}-\frac{L_s\lambda_s}{c_p}e^{-\frac{L_s\lambda_s}{c_p} a}(y-a)$ around a point $a=m_{\rm max}/2$. Inserting the first order approximation in (\ref{eq:thermalloss}) and multiplying both sides by $m_{s}$, we obtain the polynomial equation
% \begin{equation}
% T_{\text{out},s}m_{s}=(T_{\text{in},s}-T_{\text{amb}})\Bigg(e^{-\frac{L_s\lambda_s}{c_p} a}m_{s}-\frac{L_s\lambda_s}{c_p}e^{-\frac{L_s\lambda_s}{c_p} a}(1-a  m_{s})\Bigg)+T_{\text{amb}}m_{s}.
% \end{equation}
\begin{equation}\label{eq:polynoimalthermalloss}
T_{\text{out},s}m_{s}=(T_{\text{in},s}-T_{\text{amb}})\Bigg(\Big(\underbrace{e^{-\frac{L_s\lambda_s}{c_p} a}+a\frac{L_s\lambda_s}{c_p}e^{-\frac{L_s\lambda_s}{c_p} a}}_{c_s}\Big)m_s-\underbrace{\frac{L_s\lambda_s}{c_p}e^{-\frac{L_s\lambda_s}{c_p} a}}_{d_s}\Bigg)+T_{\text{amb}}m_{s}.
\end{equation}
\begin{figure}
	\centering
	\includegraphics[trim={0 0cm 0 0},clip,scale=1]{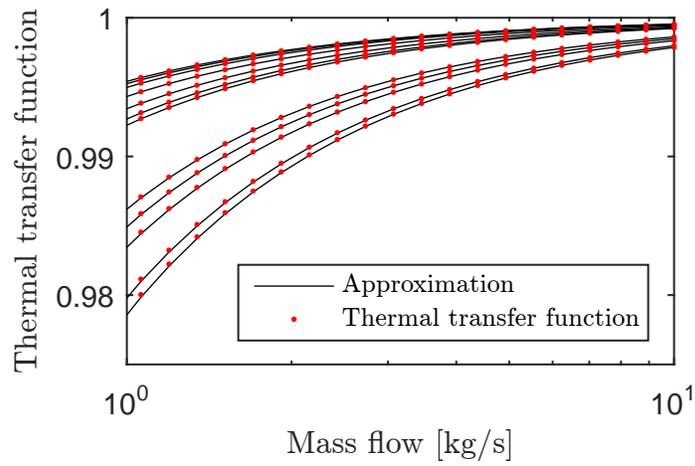}
	\caption[Approximation of thermal transfer function]{Approximation $c_s-\frac{d_s}{m_s}$, as defined in \eqref{eq:polynoimalthermalloss}, of the thermal transfer function $e^{-\frac{L_s\lambda_s}{c_p m_{s}}}$, as defined in \eqref{eq:thermalloss}, for all pipes of a network designed with $\hat{T}_{\textrm{out},s}=90^\circ$C and a target pressure of $100$Pa/m. The horizontal axis is on a logarithmic scale.}\label{fig:thermalfit}
\end{figure}

\end{document}